\theoremstyle{plain}
\newtheorem{thm}{Theorem}
\newtheorem{prop}[thm]{Proposition}
\newtheorem{nota}[thm]{Notation}
\newtheorem{rem}[thm]{Remark}
\newtheorem{defin}[thm]{Definition}
\newcommand{\R}{\mathbb{R}}
\newcommand{\N}{\mathbb{N}}
\def\multiset#1#2{\ensuremath{\left(\kern-.2em\left(\genfrac{}{}{0pt}{}{#1}{#2}\right)\kern-.2em\right)}}
\begin{document}

\title{Families of $2$-weights of some particular graphs}
\author{Agnese Baldisserri, Elena Rubei}
\date{}
\maketitle

\def\thefootnote{}
\footnotetext{ \hspace*{-0.36cm}
{\bf 2010 Mathematical Subject Classification: 05C05, 05C12, 05C22} 

{\bf Key words: weighted graphs, $2$-weights} }

\begin{abstract}
Let ${\cal G}=(G,w) $ be a positive-weighted graph, that is a graph $G$ endowed with a function $w$ from the edge set of $G$ to the set of positive real numbers; for any distinct vertices $i,j $, we define $D_{i,j}({\cal G})$ to be  the weight of the  path  in $G$ joining $i$ and $j$ with minimum weight. In this paper  we fix a particular class of graphs and we give a criterion to establish whether, given a family of positive real numbers $\{D_I\}_{I \in { \{1,...., n\}  \choose 2}}$,  there exists a positive-weighted graph ${\cal G} =(G,w) $ in the class we have fixed,  with vertex set equal to $\{1,....,n\}$ and such that $D_I ({\cal G}) =D_I$ for any  $I \in { \{1,...., n\}  \choose 2}$. In particular, the classes of graphs we consider are the following:  snakes,  caterpillars, polygons,  bipartite graphs,  complete graphs,  planar graphs. 
\end{abstract}

\section{Introduction}

For any graph $G$, let $E(G)$, $V(G)$ and $L(G)$ 
 be respectively the set of the edges,   
the set of the vertices and  the set of the leaves of $G$.
A {\bf weighted graph} ${\cal G}=(G,w)$ is a graph $G$ 
endowed with a function $w: E(G) \rightarrow \R$. 
For any edge $e$, the real number $w(e)$ is called the weight of the edge 
and, for any subgraph $H$ of  $G$, we define  $w(H)$  to be the sum of the weights of the edges of $H$. We say that ${\cal G}$ is {\bf positive-weighted}  if all the weights of the edges are positive. 

\begin{defin}
Let ${\cal G}=(G,w) $ be a positive-weighted graph; for any distinct $i,j \in V(G)$ we define 
$$D_{\{i,j\}}({\cal G})= \min \{w(H) \,| \, H \; \mbox{a  connected subgraph of } G \mbox{  such that } V(H) \ni i,j\} .$$
 More simply, we denote 
$D_{\{i,j\}}({\cal G})$ by $D_{i,j}({\cal G})$.
We say that a connected  subgraph $H$ {\bf realizes} $ D_{i,j}({\cal G})$
if   $V(H) \ni i,j$ and $w(H) = D_{i,j}({\cal G})$.
Obviously such a subgraph 
must be a path with endpoints $i$ and $j$. 
We call  the  $ D_{i,j}({\cal G})$ the {\bf $2$-weights} of ${\cal G}$.
\end{defin}

Throughout the paper we will consider only simple finite  connected graphs.
Observe that in the case $G$ is a tree,  $ D_{i,j}({\cal G})$ is  the weight of the unique path joining $i$ and $j$.

 If $S $ is a subset of $V(G)$, the $2$-weights $ D_{i,j}({\cal G})$ with $i,j \in S$ give
a vector in $\mathbb{R}^{  S \choose 2}$. This vector is called 
$2${\bf -dissimilarity vector} of $({\cal G}, S)$.
Equivalently,  we can speak 
of the {\bf family of the $2$-weights}  of $({\cal G}, S)$.

We can wonder when a family of real numbers is the family of the $2$-weights of some positive-weighted  graph and of some subset of the set of its vertices.
If $S$ is a finite set of cardinality greater than $2$,
we say that a family of  real numbers
 $\{D_{I}\}_{I \in {S \choose 2}}$   is  p-graphlike if  there exist
a positive-weighted graph ${\cal G}=(G,w)$ and a subset $S$ 
of the set of its vertices such that $ D_{I}({\cal G}) = D_{I}$  for any 
 $2$-subset $I$ of $ S$.
If the graph is a  positive-weighted  tree
 ${\cal T}=(T,w)$ we say that the family 
 is  p-treelike.

Weighted graphs have applications in several disciplines, such as 
biology, psychology,  archeology, engineering.  
Phylogenetic trees are positive-weighted trees
whose vertices represent  species and the weight of an edge is given by how much the DNA sequences of the species represented by the vertices of the edge differ; 
also archeologists represent evolutions of manuscripts by positive-weighted trees. 
There is a wide literature concerning graphlike dissimilarity families
and treelike dissimilarity families,
in particular concerning methods to reconstruct weighted trees from their dissimilarity families; these methods are used by biologists to reconstruct phylogenetic trees. 
See for example \cite{Dresslibro}, \cite{S-S} for overviews on phylogenetic trees.
Weighted graphs can represent hydraulic webs or railway webs
where the weight of an edge is given by the length or the cost 
  (or the difference between the  earnings and the cost) of the line represented by that edge. 


The first contribution to the 
characterization of  graphlike families of numbers dates back  to 1965 and it is due  to Hakimi and Yau, see  \cite{H-Y}:

\begin{thm} \label{HY} {\bf (Hakimi-Yau)}
A family of positive real numbers
$\{D_{I}\}_{I \in {\{1,...,n\} \choose 2}}$
is p-graphlike if and only if the $D_I$ satisfy the 
triangle inequalities, i.e. if and only if $D_{i,j} \leq D_{i,k} + D_{k,j}$ for any distinct $i,j,k \in [n]$.
\end{thm}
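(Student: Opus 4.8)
The plan is to establish the two implications separately. Necessity will be a one-line concatenation argument; sufficiency will follow from an explicit construction on the complete graph.

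First I would handle necessity. Suppose $\{D_I\}$ is p-graphlike, say realized by a positive-weighted graph $\mathcal{G}=(G,w)$ and a subset $S$ of $V(G)$ with $D_I(\mathcal{G})=D_I$ for all $I \in \binom{S}{2}$. Given distinct $i,j,k$ (identified with elements of $S$), pick a path $P$ realizing $D_{i,k}(\mathcal{G})$ and a path $Q$ realizing $D_{k,j}(\mathcal{G})$; then $P\cup Q$ is a connected subgraph of $G$ containing $i$ and $j$, of weight at most $w(P)+w(Q)=D_{i,k}+D_{k,j}$ (edges common to $P$ and $Q$ being counted once), so $D_{i,j}=D_{i,j}(\mathcal{G})\le D_{i,k}+D_{k,j}$ by the minimality in the definition of $D_{i,j}(\mathcal{G})$.

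For sufficiency I would take $G=K_n$ with vertex set $[n]$ and weight $w(\{i,j\})=D_{i,j}$ on each edge; positivity of the $D_I$ makes $\mathcal{G}=(G,w)$ a positive-weighted (simple, connected) graph, and I would set $S=V(G)$. It then remains to check that $D_{i,j}(\mathcal{G})=D_{i,j}$ for every pair. The inequality $D_{i,j}(\mathcal{G})\le D_{i,j}$ is immediate, since the single edge $\{i,j\}$ is a connected subgraph containing $i$ and $j$ of weight $D_{i,j}$. For the reverse inequality, recall (as already observed in the paper right after the definition) that the minimum defining $D_{i,j}(\mathcal{G})$ is attained on a path $i=v_0,v_1,\dots,v_m=j$, whose weight is $\sum_{l=1}^{m} D_{v_{l-1},v_l}$; an induction on $m$ using the triangle inequalities yields $D_{i,j}=D_{v_0,v_m}\le\sum_{l=1}^{m} D_{v_{l-1},v_l}$, hence $D_{i,j}(\mathcal{G})\ge D_{i,j}$.

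I do not expect a genuine obstacle: the statement is elementary. The only steps that deserve to be written with a little care are the telescoping (iterated) use of the triangle inequality in the sufficiency direction, best phrased as an induction on the number of edges of the path, together with the reduction — justified by the remark following the definition — that a minimum-weight connected subgraph joining two vertices is necessarily a path, so it suffices to compare with paths rather than with arbitrary connected subgraphs.
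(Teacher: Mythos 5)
Your proof is correct. Note that the paper itself does not prove this statement --- it is quoted as a classical 1965 result of Hakimi and Yau with only a citation --- so there is no in-paper argument to compare against; your argument is the standard one: for necessity, the union of a path realizing $D_{i,k}(\mathcal{G})$ and a path realizing $D_{k,j}(\mathcal{G})$ is a connected subgraph containing $i$ and $j$ of weight at most $D_{i,k}+D_{k,j}$, and for sufficiency, the complete graph $K_n$ weighted by $w(e(i,j))=D_{i,j}$ realizes the family, the inequality $D_{i,j}(\mathcal{G})\ge D_{i,j}$ following from a telescoping (inductive) application of the triangle inequalities along the vertices of a minimizing path. Both implications are complete as written, and the two points you flag as needing care (reduction to paths, and the induction on the path length) are exactly the right ones.
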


In the same years, also a criterion for a metric on
a finite set to be p-treelike
was established, see \cite{B}, \cite{SimP}, \cite{Za}:

\begin{thm} \label{Bune} {\bf (Buneman-SimoesPereira-Zaretskii)}
Let $\{D_{I}\}_{I \in {\{1,...,n\} \choose 2}}$ be a family  of positive real numbers 
satisfying the triangle inequalities.
It is p-treelike  
 if and only if the $D_I$ satisfy the so-called 
four-point condition, i.e., for all distinct $i,j,k,h  \in \{1,...,n\}$,
the maximum of $$\{D_{i,j} + D_{k,h},D_{i,k} + D_{j,h},D_{i,h} + D_{k,j}
 \}$$ is attained at least twice. 
\end{thm}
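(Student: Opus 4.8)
The plan is to prove the Buneman--Sim\~oes-Pereira--Zaretskii theorem in both directions, with the bulk of the work going into the ``if'' direction, where we must construct a positive-weighted tree from a metric satisfying the four-point condition.

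\textbf{The ``only if'' direction.} Suppose $\{D_I\}$ is realized by a positive-weighted tree ${\cal T}=(T,w)$ with $\{1,\dots,n\}\subseteq V(T)$. Fix distinct $i,j,k,h$. In a tree the path between any two of these vertices is unique, so $D_{i,j}$ etc.\ are just sums of edge weights along paths. The standard picture is that the convex hull (Steiner tree) of $\{i,j,k,h\}$ in $T$ looks like an ``H'': there are (possibly trivial) paths meeting at two branch points $u,v$, with $i,j$ attached on one side and $k,h$ on the other (up to relabelling). Writing $a=D(i,u)$, $b=D(j,u)$, $c=D(k,v)$, $d=D(h,v)$, $m=D(u,v)$, one computes the three pairwise sums $D_{i,j}+D_{k,h}=a+b+c+d$, $D_{i,k}+D_{j,h}=D_{i,h}+D_{j,k}=a+b+c+d+2m$; so the two largest are equal. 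Since the labelling of which pair sits ``together'' depends only on the combinatorics, I would phrase this as: among the three sums, the minimum is the one pairing the two vertices that are separated from the other two by the middle path, and the other two sums coincide. This gives the four-point condition.

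\textbf{The ``if'' direction.} I would argue by induction on $n$. The base case $n=3$: given $D_{1,2},D_{1,3},D_{2,3}$ satisfying the triangle inequalities, build a star with center $c$ and set $w(c,i)=\frac12(D_{i,j}+D_{i,k}-D_{j,k})$ for $\{i,j,k\}=\{1,2,3\}$; the triangle inequalities (strictly, one should check they are strict; if some leg has weight $0$ one just identifies that vertex with $c$, i.e.\ the tree is a path) guarantee positivity, and one checks the path weights reproduce the $D_I$. For the inductive step, assume the claim for $n-1$ and let $\{D_I\}_{I\in\binom{\{1,\dots,n\}}{2}}$ satisfy the triangle inequalities and the four-point condition. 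The restriction to $\{1,\dots,n-1\}$ still satisfies both conditions, so by induction there is a positive-weighted tree ${\cal T}'=(T',w')$ realizing it. The task is to attach vertex $n$ to ${\cal T}'$ correctly. To locate the attachment point, I would use the numbers $D_{n,i}$ together with the known $D_{i,j}$ ($i,j\le n-1$): for a candidate edge or vertex one can compute, via the Gromov-product type quantity $\gamma_i := \frac12(D_{n,i}+D_{n,j}-D_{i,j})$ (the ``distance from $n$ down to the path toward $i$''), where along $T'$ the path from $n$ should branch off; the four-point condition among $\{n,i,j,k\}$ is exactly what guarantees these branch-off data are consistent, i.e.\ that there is a single well-defined point $p\in T'$ (subdividing an edge if necessary) and a single pendant length $\ell = D_{n,i}-D'(p,i)\ge 0$ such that attaching $n$ to $p$ by an edge of weight $\ell$ works simultaneously for all $i\le n-1$. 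One then checks $\ell>0$ unless $n$ should be identified with an existing vertex, and finally verifies $D_{n,i}({\cal T})=D_{n,i}$ for every $i$.

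\textbf{Main obstacle.} The delicate point is the inductive step: proving that the ``attachment data'' extracted from the $\gamma_i$'s are globally consistent — that all the triples $\{n,i,j\}$ point to the \emph{same} location $p$ in $T'$ and the \emph{same} pendant length — and that this is exactly where (and only where) the four-point condition is needed. Concretely, one picks two reference vertices, say $1$ and $2$, determines $p$ and $\ell$ from the triple $\{n,1,2\}$, and then must show that for every other $i$ the triple $\{n,1,i\}$ (and $\{n,2,i\}$) is consistent with this $p,\ell$; expanding the relevant four-point inequalities on $\{n,1,2,i\}$ in terms of the tree metric of ${\cal T}'$ yields precisely the needed equalities. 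Care is also required about degenerate cases (when $p$ is an existing vertex rather than an interior edge point, or when $\ell=0$), and about checking that subdividing an edge of $T'$ and reassigning weights keeps all weights positive and leaves $D_{i,j}$ unchanged for $i,j\le n-1$ — these are routine but must be stated.
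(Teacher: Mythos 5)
First, a point of reference: the paper does not prove Theorem \ref{Bune} at all --- it is quoted as a classical result with citations to \cite{B}, \cite{SimP}, \cite{Za} --- so your attempt can only be judged against the standard argument, not against anything in the paper. Your ``only if'' direction is correct: the Steiner tree of four points in a tree is an (possibly degenerate) H-shape, and the computation you indicate shows the maximum of the three pairwise sums is attained twice. The base case $n=3$ of the ``if'' direction is also fine.

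The genuine gap is in the concrete recipe you give for the inductive step. You propose to ``determine $p$ and $\ell$ from the triple $\{n,1,2\}$'' and then verify that every other triple $\{n,1,i\}$ is consistent with that choice. This does not work: the triple $\{n,1,2\}$ only determines the median of $n,1,2$ in the final tree, i.e.\ the point where the path from $n$ meets $p(1,2)$, together with the quantity $(1|2)_n=\tfrac12(D_{n,1}+D_{n,2}-D_{1,2})$, which is the distance from $n$ to the path $p(1,2)$. The true attachment point $p^*$ (the point of $T'$ nearest to $n$) need not lie on $p(1,2)$ --- e.g.\ if $T'$ is a star with leaves $1,2,3$ and $n$ hangs off the branch toward $3$ --- and in that case the pair $(p,\ell)$ read off from $\{n,1,2\}$ satisfies $\ell+D'(p,i)=D_{n,i}+2D'(p,p^*)>D_{n,i}$ for vertices $i$ beyond $p^*$, so your consistency check fails even though the family is treelike. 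The missing idea is that one must take $\ell=\min_{i<j\le n-1}\tfrac12(D_{n,i}+D_{n,j}-D_{i,j})$, locate $p$ on the path $p(i_0,j_0)$ for a minimizing pair $(i_0,j_0)$, and only then use the four-point condition on quadruples $\{n,i_0,j_0,i\}$ to prove $D_{n,i}=\ell+D'(p,i)$ for all other $i$. With that modification (and the degenerate-case bookkeeping you already flag), your plan becomes the standard proof; as written, the inductive step would break on the first example where the new vertex does not attach along the path between your two chosen reference vertices.
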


Also   the case of not necessarily nonnegative weights has been studied, see \cite{H-P} for graphs  and  \cite{B-S} for trees.

Finally we want to mention that recently 
$k$-weights of weighted graphs for $k \geq 3$ have been introduced and studied; in particular there are some results concerning the characterization of families of $k$-weights,
see for instance  \cite{B-R}, \cite{B-R2},   \cite{H-H-M-S}, \cite{Iri}, 
\cite{L-Y-P}, \cite{P-S}. 
The study of  $k$-weights for $k \geq 3$ 
is motivated by the fact that they are more reliable statistically than $2$-weights
and so the reconstruction of weighted trees from them can be more accurate than the reconstruction from $2$-weights.

In this paper, we fix a particular class of graphs,
we consider a family of positive real numbers
 $\{D_I\}_{I \in { \{1,...., n\}  \choose 2}}$ and  
 we give a criterion to establish whether  there exists a positive-weighted graph 
${\cal G} =(G,w) $ in the class we have fixed,  with $V(G)=\{1,....,n\}$ and such that $D_I ({\cal G}) =D_I$ for any  
$I \in { \{1,...., n\}  \choose 2}$.
In particular the classes we consider are the following:  snakes,  caterpillars, polygons,  bipartite graphs,  complete graphs,  planar graphs.

\section{Some definitions and some remarks}

\begin{nota}
$\bullet $ Let $ \mathbb{R}_{+} =\{x \in \mathbb{R} | \; x >0\}$.

$ \bullet $ Throughout the paper let $n \in \N -\{0,1\}$ and let $[n]= \{1,..., n\}$.

$ \bullet $ If $G$ is a graph, for any $v,v' \in V(G)$, let $e(v,v') $ denote the edge joining $v$ and $v'$.

$ \bullet $ For any  graph $G$, let $V^i(G)$ be the set of the vertices of $G$ of degree $i$ and let  $V^{\geq i} (G) = \cup_{j \geq i } V^j(G)$. 

$\bullet $   For any  graph $G$, we say that an edge is {\bf pendant}
if it is incident to a leaf.

$ \bullet $ If $T$ is a tree, for any $v,v' \in V(T)$, let $p(v,v') $ denote the unique path joining $v$ and $v'$.

$\bullet$ Throughout the paper, the word ``tree'' will denote a finite  tree and the word ``graph'' will denote a finite connected graph.

$\bullet $  Let $n \in \N-\{0,1\}$. For any  family of  real numbers  
$\{D_I\}_{I \in {[n] \choose 2}}$, we denote $D_{\{i,j\}}$ by $D_{i,j}$.
\end{nota}

\begin{defin}
\begin{itemize}
\item A \textbf{snake}  is a tree with only $2$ leaves.
\item We say that a tree $C$ is a \textbf{caterpillar}  if there is a  path $S$ such that $V(S)= V^{\geq 2} (C)$. 
We call $S$ the \textbf{spine} of the caterpillar. 
\item 
A graph $P$ with $n$ vertices  is a \textbf{polygon} if we can rename the vertices by $i_1,....,i_n$ in such way that $E(P)=\{ e(i_1,i_2) ,....., e(i_{n-1}, i_n) , e(i_n ,i_1) \}$.

\item A graph $G$ is \textbf{complete} if $ E(G)$ contains the edge  $e(i,j) $ for any $i,j \in V(G)$. The complete graph with $n$ vertices is usually denoted by $K_n$.
\item a graph $B$ is a \textbf{bipartite graph} on two subsets $X$ and $Y$ of $V(B)$ if: 
\begin{itemize}
\item[-] $X \cap Y = \emptyset$;
\item[-] $X \cup Y = V(B)$;
\item[-] $E(B) \subset \{e(x,y) \, | \, x \in X, \, y \in Y\}$.
\end{itemize}
\item A  \textbf{bipartite graph} $B$ on two subsets $X$ and $Y$ of $V(B)$ is \textbf{complete} if $E(B)$ contains the edge  $e(i,j) $ for any $i \in X, j \in Y$. The complete bipartite graph on two sets one of cardinality $m$ and one of cardinality   $n$ is usually denoted by $K_{m,n}$. 
\end{itemize}
\end{defin}

\begin{rem} Let $B$ be a graph and let 
$X, Y, P, Q \subset V(B)$ such that $B$ is a bipartite graph on $X$ and $Y$ and is a bipartite graph on $P$ and $Q$; then we can 
 easily   show that
  $X=P$ and $Y=Q$ or vice versa.
\end{rem}

\begin{defin}
Let ${\cal G}=(G,w)$ a positive-weighted graph, we say that an edge $e$ of $G$ is \textbf{useful} if there exist  $i,j \in V(G)$ such that all the paths realizing the $2$-weight $D_{i,j}({\cal G})$ contain the edge $e$. We say that an edge $e$ is \textbf{useless} if it is not useful, that is, if all the $2$-weights of the graph are realized  by at least a path which do not contain $e$. Finally, we say that a graph ${\cal G}$ is \textbf{pruned} if all its edges are useful.  
\end{defin}

\begin{defin}
Let $\{D_I\}_{I \in {[n] \choose 2}}$ be a family  in $\R_{+}$.
 We say that the family is
\begin{itemize}
\item \textbf{snakelike} if there exists a positive-weighted snake ${\cal S}=(S,w)$ with $V(S)=[n]$ such that $D_{i,j}({\cal S})=D_{i,j}$ for any $i,j \in [n]$,
\item \textbf{caterpillarlike} if there exists a positive-weighted caterpillar ${\cal C}=(C,w)$ with $V(C)=[n]$ such that $D_{i,j}({\cal C})=D_{i,j}$ for any $i,j \in [n]$,
\item \textbf{polygonlike} if there exists a positive-weighted polygon ${\cal P}=(P,w)$ with $V(P)=[n]$ such that $D_{i,j}({\cal P})=D_{i,j}$ for any $i,j \in [n]$,
\item \textbf{co-graphlike} if there exists a positive-weighted complete graph ${\cal G}=(G,w)$ with $V(G)=[n]$ such that $D_{i,j}({\cal G})=D_{i,j}$ for any $i,j \in [n]$ and $G$ is pruned,
\item \textbf{bigraphlike} on two subsets $X$ and $Y$ of $[n]$ if there exists a positive-weighted bipartite graph ${\cal B}=(B,w)$ with $V(B)=[n]$ on $X,Y $ such that $D_{i,j}({\cal B})=D_{i,j}$ for any $i,j \in [n]$,
\item \textbf{co-bigraphlike} if there exists a pruned positive-weighted complete bipartite graph ${\cal B}=(B,w)$ with $V(B)=[n]$ such that $D_{i,j}({\cal B})=D_{i,j}$ for any $i,j \in [n]$,
\item \textbf{planargraphlike} if there exists a positive-weighted planar graph ${\cal G}=(G,w)$ with $V(G)=[n]$ such that $D_{i,j}({\cal G})=D_{i,j}$ for any $i,j \in [n]$.
\end{itemize} 
To be precise we should say ``p-snakelike, p-caterpillarlike....''
to point out that we are considering positive-weighted graphs, but, since we will consider only positive-weighted graphs and so no confusion can arise, for semplicity we will omit the letter ``p''.
\end{defin}

\begin{rem}\label{rem2}
Let ${\cal G}=(G,w)$ be a positive-weighted graph with $V(G)=[n]$ and let $i,j \in [n]$; if $D_{i,j}({\cal G})$ is realized by a path $H$ in $G$, then, for any $k,t \in V(H)$, the $2$-weight $D_{k,t}({\cal G})$ is realized by the path   in $H$ with endpoints $k$ and $t$.
\end{rem}

\begin{proof}
Suppose, contrary to our claim, that there exist $k,t \in V(H)$ such that any path realizing $D_{k,t}({\cal G})$ is not contained in $H$. Call $J$ one of the paths realizing $D_{k,t}({\cal G})$  and call $H'$ a path joining $k$ with $t$ contained in $H$. Then we would have:
\begin{equation} \label{sss}
w(H') > D_{k,t}({\cal G}) =w(J);
\end{equation}
moreover,
$$w(H) = D_{i,j}({\cal G}) \leq w((H \smallsetminus H') \cup J) \leq w(H \smallsetminus H') + w(J);$$
thus 
$$w(H') \leq w(J),$$
which is absurd because it contradicts (\ref{sss}).
\end{proof}

\begin{defin} Let $\{D_I\}_{I \in {[n] \choose 2}}$ be a family  in $\R_{+}$ satisfying the triangle inequalities; we say that an element of the family $D_{i,j}$ is \textbf{indecomposable} if $D_{i,j}<D_{i,z}+D_{z,j}$ for any $z \in [n]\smallsetminus \{i,j\}$.
\end{defin}

\begin{rem}\label{latounico}
Let ${\cal G}=(G,w)$ be  a positive-weighted graph 
 such that $V(G)=[n]$. For any  $i,j \in [n]$, the $2$-weight $D_{i,j}(\cal{G})$ is indecomposable if and only if $E(G)$ contains the edge $e(i,j)$ and $e(i,j) $ is useful. In this case we have that the $2$-weight $D_{i,j}(\cal{G})$ is realized only by the edge $e(i,j)$ and, in particular, $D_{i,j}(\cal{G})=w(e(i,j))$.
\end{rem}

\begin{proof}
Suppose that $D_{i,j}(\cal{G})$ is indecomposable; if it were realized by a path joining $i$ with $j$ different from $e(i,j)$, it would contain another vertex $z \in [n] \smallsetminus \{i,j\}$, then, by Remark \ref{rem2}, we would have that $D_{i,j}(\cal{G})=D_{i,z}(\cal{G})+D_{z,j}(\cal{G})$, which is absurd. So $D_{i,j}(\cal{G})$ can be realized only by $e(i,j)$, thus  $e(i,j) \in E(G)$ and $e(i,j)$ useful. Conversely, suppose to have a useful edge $e(i,j) \in E(G)$: by definition, there exist two vertices $a,b \in V(G)$ such that all the paths realizing $D_{a,b}({\cal G})$ contain $e(i,j)$. Then the $2$-weight $D_{i,j}(\cal{G})$ is realized by the edge $e(i,j)$ (by Remark \ref{rem2}) and it can be realized only 
by the edge $e(i,j)$, so it is indecomposable. 
\end{proof}


\section{Snakes and caterpillars}

In this section we give a characterization of snakelike families  in $ \mathbb{R}_{+}$ and  a characterization of caterpillarlike ones. 

\begin{thm}\label{thm:snakelike}
Let $\{D_I\}_{I \in {[n] \choose 2}}$ be a family  in $\R_{+}$
 and let $x,y \in [n]$ such that $D_{x,y}=max_{i,j \in [n]}\{D_{i,j}\}$; the family is snakelike if and only if $D_{i,j}=|D_{i,x}-D_{j,x}|$ for any distinct $i, j \in [n] \smallsetminus \{x\}$.
\end{thm}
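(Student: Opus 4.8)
The plan is to prove both directions directly from the structure of a snake. For the "only if" direction, suppose the family is snakelike, witnessed by a positive-weighted snake $\mathcal{S}=(S,w)$ with $V(S)=[n]$. Since $S$ is a tree with exactly two leaves, it is a single path; let $u$ and $v$ be its two endpoints. All $2$-weights are realized by sub-paths of this path, and in particular $D_{u,v}(\mathcal S)$ is the weight of the whole path, hence $D_{u,v}(\mathcal{S}) = \max_{i,j}\{D_{i,j}(\mathcal{S})\}$. I would first argue that if $x,y$ are \emph{any} pair attaining the maximum of the family, then necessarily $\{x,y\}=\{u,v\}$: indeed, the path from $x$ to $y$ has weight $\le$ the weight of the whole path, with equality only if $x$ and $y$ are the two endpoints (here positivity of the weights is essential, so that omitting any edge strictly decreases weight). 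Once $x$ is an endpoint, linearly order the vertices $x = a_1, a_2, \dots, a_n = y$ along the path. Then for any $i,j \ne x$, say $a_i$ lies between $x$ and $a_j$ along the path; the path from $x$ to $a_j$ passes through $a_i$, so $D_{x,a_j}(\mathcal S) = D_{x,a_i}(\mathcal S) + D_{a_i,a_j}(\mathcal S)$ by Remark~\ref{rem2}, which gives $D_{a_i,a_j}(\mathcal S) = D_{x,a_j}(\mathcal S) - D_{x,a_i}(\mathcal S) = |D_{x,a_i}(\mathcal S) - D_{x,a_j}(\mathcal S)|$. Translating back to the $D$'s gives the stated identity.

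For the "if" direction, assume $D_{i,j} = |D_{i,x} - D_{j,x}|$ for all distinct $i,j \in [n]\setminus\{x\}$. I would reconstruct the snake explicitly. Relabel $[n]\setminus\{x\}$ as $b_1, \dots, b_{n-1}$ so that $D_{x,b_1} \le D_{x,b_2} \le \dots \le D_{x,b_{n-1}}$; note $D_{x,b_{n-1}} = \max_j D_{x,j}$, and I should check this equals $D_{x,y}$, i.e. that $y$ can be taken to be $b_{n-1}$ — this follows because $D_{x,y}$ is the global maximum and the hypothesis combined with the triangle-type manipulation forces every $D_{i,j}$ to be at most $D_{x,b_{n-1}}$ (more carefully: $D_{i,j} = |D_{x,i}-D_{x,j}| \le D_{x,b_{n-1}}$, and $D_{x,j} \le D_{x,b_{n-1}}$ by choice, so indeed $D_{x,b_{n-1}}$ is the maximum, hence equal to $D_{x,y}$). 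Build the path $x, b_1, b_2, \dots, b_{n-1}$ with edge weights $w(e(x,b_1)) = D_{x,b_1}$ and $w(e(b_k, b_{k+1})) = D_{x,b_{k+1}} - D_{x,b_k}$ for $k = 1,\dots,n-2$. All these weights are positive: $D_{x,b_1}>0$ since the family lies in $\R_+$, and $D_{x,b_{k+1}} - D_{x,b_k} = D_{b_k,b_{k+1}} > 0$ by the hypothesis (it equals a family member, which is positive; note $b_k \ne b_{k+1}$). This gives a genuine positive-weighted snake $\mathcal S$ with vertex set $[n]$.

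It then remains to verify $D_{i,j}(\mathcal S) = D_{i,j}$ for all pairs. Since $\mathcal S$ is a path, $D_{i,j}(\mathcal S)$ is just the sum of edge weights between $i$ and $j$ along it. For a pair $\{x, b_k\}$ this telescopes to $D_{x,b_1} + \sum_{\ell=1}^{k-1}(D_{x,b_{\ell+1}}-D_{x,b_\ell}) = D_{x,b_k}$, as wanted. For a pair $\{b_k, b_m\}$ with $k<m$, it telescopes to $D_{x,b_m} - D_{x,b_k}$, which equals $|D_{x,b_k} - D_{x,b_m}| = D_{b_k,b_m}$ by hypothesis. That covers all pairs, completing the proof.

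The main obstacle — though it is more bookkeeping than a deep difficulty — is the careful handling of the maximizing pair $x,y$: the theorem fixes an \emph{arbitrary} pair attaining the maximum and asserts the criterion in terms of that $x$, so in the "only if" direction one must show any maximizing pair consists of the two leaves (using positivity crucially), and in the "if" direction one must confirm the reconstructed snake is consistent with the given $x$ being a leaf and with $D_{x,y}$ being the maximum. Everything else is the standard telescoping identity for path metrics together with Remark~\ref{rem2}.
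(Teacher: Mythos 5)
Your proof is correct and follows essentially the same route as the paper: the reconstruction in the ``if'' direction (order the vertices of $[n]\smallsetminus\{x\}$ by their distance from $x$, build a path with telescoping edge weights, and verify positivity via $D_{x,b_{k+1}}-D_{x,b_k}=D_{b_k,b_{k+1}}>0$) is exactly the paper's construction, and your more detailed treatment of the ``only if'' direction fills in what the paper dismisses as ``very easy to prove.''
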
  

\begin{proof} $\Longrightarrow $ Very easy to prove.

$\Longleftarrow $ 
First note  that  $D_{a,x} \neq D_{b,x}$ for any distinct $a,b \in [n] \smallsetminus \{x\}$; otherwise we would have: 
$$D_{a,b}=|D_{a,x}-D_{b,x}|=0,$$ 
which is absurd because, by assumption, the elements of the family are positive. 
Let us denote the elements of $[n] \smallsetminus \{x,y\}$ by 
 $i_1,i_2,...,i_{n-2}$ in  such a way that 
 $$D_{i_j,x}<D_{i_{j+1},x}$$ for any $j =1,....,n-3$
and let  ${\cal S}= (S,w) $ be the positive-weighted snake  defined
as follows (see  Figure \ref{fig:snake}):
let $S$ be the  snake with $V(S)=[n]$ and $E(S)=
 \{ e(x,i_1), e(i_1,i_2),......, e(i_{n-2},y) \}$ and 
define the  weights of $ e(x,i_1), e(i_1,i_2),......, e(i_{n-2},y)$ to be, respectively, 
$D_{i_1,x}$, $D_{i_2,x}-D_{i_1,x}$, $D_{i_3,x}-D_{i_2,x}$,......,$D_{y,x}-D_{i_{n-2},x}$.

\begin{figure}[h!]
\begin{center}
\begin{tikzpicture}
\draw [thick] (0,0) --(12,0);

\node [below] at (0,0) {$\mathbf{x}$};
\node [below] at (2,0) {$\mathbf{i_1}$};
\node [below] at (4,0) {$\mathbf{i_2}$};
\node [below] at (10,0) {$\mathbf{i_{n-2}}$};
\node [below] at (12,0) {$\mathbf{y}$};

\node [above] at (1,0) {\footnotesize $D_{i_1,x}$};
\node [above] at (3,0) {\footnotesize $D_{i_2,x}-D_{i_1,x}$};
\node [above] at (11,0) {\footnotesize $D_{y,x}-D_{i_{n-2},x}$};

\draw [fill] (0,0) circle [radius=0.06];
\draw [fill] (2,0) circle [radius=0.06];
\draw [fill] (4,0) circle [radius=0.06];
\draw [fill] (10,0) circle [radius=0.06];
\draw [fill] (12,0) circle [radius=0.06];
\draw [fill] (6,0) circle [radius=0.06];
\end{tikzpicture}
\caption{a positive-weighted snake realizing the family $\{D_{i,j}\}${\label{fig:snake}}}
\end{center}
\end{figure}
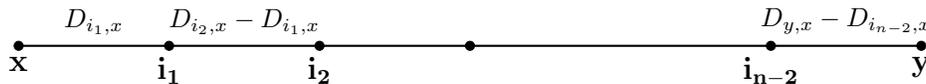

We have to check that $D_{i,j}({\cal S})=D_{i,j}$ for any $i,j \in [n]:$

\begin{itemize}
\item[-] $D_{i,x}({\cal S})=D_{i,x}$ for any $i \in [n] \smallsetminus \{x\}$ by construction;
\item[-] $D_{i,j}({\cal S})=|D_{i,x}({\cal S})-D_{j,x}({\cal S})|= |D_{i,x}-D_{j,x}|= D_{i,j}$ for any $i,j \in [n] \smallsetminus \{x\}$, where the last equality holds by assumption.
\end{itemize}
\end{proof}

Before 
studying caterpillarlike families, we introduce a definition and we state a theorem that will be useful later:

\begin{defin}\label{medianfamily}
Let  $\{D_I\}_{I \in {[n] \choose 2}}$ be a family in $\R_{+}$. We say that the family $\{D_I\}$ is a {\bf median family} if, for any $a,b,c \in [n]$, there exists a unique element $m \in [n]$ such that 
$$D_{i,j}=D_{i,m}+D_{j,m}$$
for any distinct  $i,j \in \{a,b,c\}.$
\end{defin}

Observe that a median family satisfies the triangle inequalities.
The theorem below, probably well-known to experts,  was suggested to us by an anonymous referee in October 2014; later we have found it also
in \cite{H-F};
we defer to \cite{B-R5} for a shorter proof.

\begin{thm}\label{thm:median}
Let $\{D_I\}_{I \in {[n] \choose 2}}$ be a family  in $\R_{+}$. There exists a positive-weighted tree ${\cal T}=(T,w)$, with $V(T)=[n]$, such that $D_I(\cal{T})=D_ I$ for all $I \in {[n] \choose 2}$ if and only if the four-point condition holds and  the family $\{D_I\}_I$ is median.
\end{thm}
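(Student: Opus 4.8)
The plan is to prove Theorem~\ref{thm:median} by combining the classical Buneman--Simões-Pereira--Zaretskii criterion (Theorem~\ref{Bune}) with a direct argument showing that, under the standing hypothesis that the four-point condition holds, the tree-realizability of the family is equivalent to the median property. Since the four-point condition already characterizes treelikeness once the triangle inequalities are known, and since a median family automatically satisfies the triangle inequalities (as observed right after Definition~\ref{medianfamily}), it will suffice to show: \emph{assuming the four-point condition and the triangle inequalities, the family is realized by a positive-weighted tree on vertex set exactly $[n]$ (i.e.\ with no extra Steiner vertices and no internal vertices that are not in $[n]$) if and only if the family is median.}

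First I would prove the forward direction. Suppose ${\cal T}=(T,w)$ is a positive-weighted tree with $V(T)=[n]$ realizing the family. Fix $a,b,c\in[n]$. In any tree, the three paths $p(a,b)$, $p(b,c)$, $p(a,c)$ share a unique common vertex $m\in V(T)=[n]$ (the ``median'' or Steiner point of the triple, which lies on all three paths). Then $p(a,b)=p(a,m)\cup p(m,b)$ as edge sets, and these two subpaths are internally disjoint, so $D_{a,b}=w(p(a,b))=w(p(a,m))+w(p(m,b))=D_{a,m}+D_{m,b}$, and similarly for the other two pairs. Uniqueness of $m$ as an element of $[n]$ satisfying all three equations follows because any $m'\in[n]$ with $D_{i,j}=D_{i,m'}+D_{j,m'}$ for all pairs $i,j\in\{a,b,c\}$ must lie on all three paths $p(a,b),p(b,c),p(a,c)$ (by Remark~\ref{rem2}-type reasoning: $D_{a,b}=D_{a,m'}+D_{m',b}$ forces $p(a,m')$ and $p(m',b)$ to have disjoint edge sets whose union is a path from $a$ to $b$, hence $m'\in p(a,b)$), and the intersection of the three paths is the single vertex $m$. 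Hence the family is median.

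For the converse, I would start from the known realizing tree. By Theorem~\ref{Bune}, the four-point condition plus the triangle inequalities gives a positive-weighted tree ${\cal T}=(T,w)$ realizing the family with $[n]\subseteq V(T)$, and we may take ${\cal T}$ pruned (every edge useful), so that $V(T)\smallsetminus[n]\subseteq V^{\geq 3}(T)$: every non-labelled vertex has degree at least $3$ (a leaf of $T$ must be labelled, and a degree-$2$ non-labelled vertex could be suppressed). I then want to show the median hypothesis forces $V(T)=[n]$. Suppose for contradiction there is a vertex $v\in V(T)\smallsetminus[n]$. Since $\deg(v)\geq 3$, removing $v$ splits $T$ into at least three components, each containing at least one vertex of $[n]$ (else an entire branch would be unlabelled, contradicting prunedness/leaves being labelled). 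Pick $a,b,c\in[n]$ in three distinct components of $T\smallsetminus v$; then $v$ is the common point of $p(a,b),p(b,c),p(a,c)$, so the \emph{geometric} median of $\{a,b,c\}$ is $v\notin[n]$. The median hypothesis supplies $m\in[n]$ with $D_{i,j}=D_{i,m}+D_{j,m}$ for all pairs in $\{a,b,c\}$; by the path-intersection argument above, $m$ must lie on all three paths, hence $m=v$, contradicting $v\notin[n]$. Therefore $V(T)=[n]$, and ${\cal T}$ is the desired tree.

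\textbf{The main obstacle} I anticipate is the careful handling of the auxiliary fact that in a positive-weighted tree the equation $D_{a,b}=D_{a,m}+D_{m,b}$ is equivalent to $m$ lying on the path $p(a,b)$, and that for a triple the set of such simultaneous ``between'' points is exactly the single median vertex; this is intuitively clear but needs a clean statement, and it is used in both directions. A secondary point requiring care is the reduction to a pruned tree in which all leaves and all degree-$2$ vertices are labelled — one must check this normalization can be performed without changing the $2$-weights on $[n]$ (suppressing an unlabelled degree-$2$ vertex by merging its two edges, and noting the standard uniqueness of the minimal realizing tree from Theorem~\ref{Bune}). Once these are in place the argument is a short synthesis, with no heavy computation.
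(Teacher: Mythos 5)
The paper does not actually prove Theorem~\ref{thm:median}: the authors state it and defer the proof to the references \cite{H-F} and \cite{B-R5}, so there is no internal argument to compare yours against. Your proof is correct and follows the natural route one would expect those references to take. The forward direction correctly identifies the family median with the Steiner point of the triple (which lies in $[n]$ because $V(T)=[n]$) and gets uniqueness from the fact that $D_{a,b}=D_{a,m'}+D_{m',b}$ in a positively weighted tree forces $m'\in p(a,b)$, so any candidate lies in the one-point intersection of the three paths. The converse correctly combines Theorem~\ref{Bune} (applicable because, as the paper notes, a median family satisfies the triangle inequalities) with the normalization that unlabelled leaves can be deleted and unlabelled degree-$2$ vertices suppressed without altering the $2$-weights on $[n]$, after which an unlabelled vertex would have degree at least $3$ and would be the geometric median of some triple of labelled vertices, contradicting the median hypothesis. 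The only points deserving an explicit line in a final write-up are the two facts you already flag as the ``main obstacle'': the betweenness criterion $D_{a,b}=D_{a,m}+D_{m,b}\Leftrightarrow m\in p(a,b)$, and the fact that each component of $T\smallsetminus\{v\}$ contains a labelled vertex (because every leaf of the normalized tree is labelled). Both are routine, so I see no gap.
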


\vspace{0.5cm}

Now, consider a positive-weighted caterpillar ${\cal C}=(C,w)$ with $V(C)=[n]$.
 
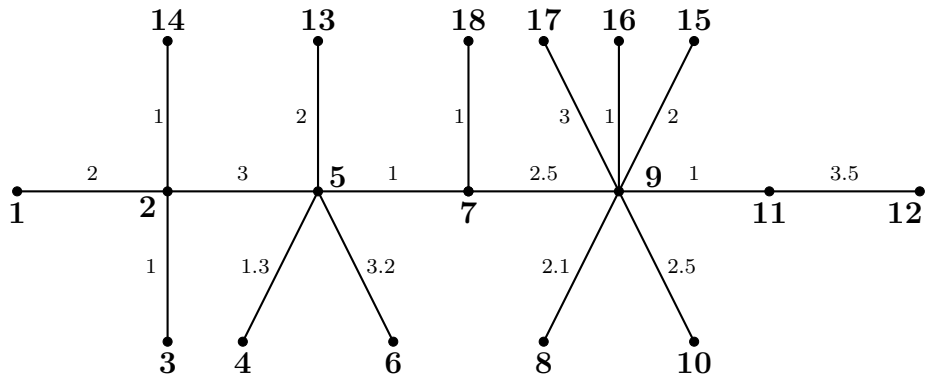
\begin{figure}[h!]
\begin{center}
\begin{tikzpicture}
\draw [thick] (0,0) --(12,0);
\draw [thick] (2,0) --(2,2);
\draw [thick] (2,0) --(2,-2);
\draw [thick] (4,0) --(4,2);
\draw [thick] (4,0) --(5,-2);
\draw [thick] (4,0) --(3,-2);
\draw [thick] (6,0) --(6,2);
\draw [thick] (8,0) --(8,2);
\draw [thick] (8,0) --(7,2);
\draw [thick] (8,0) --(9,2);
\draw [thick] (8,0) --(7,-2);
\draw [thick] (8,0) --(9,-2);

\node [below] at (0,0) {$\mathbf{1}$};
\node [left] at (2,-0.2) {$\mathbf{2}$};
\node [below] at (2,-2) {$\mathbf{3}$};
\node [below] at (3,-2) {$\mathbf{4}$};
\node [right] at (4,0.2) {$\mathbf{5}$};
\node [below] at (5,-2) {$\mathbf{6}$};
\node [below] at (6,0) {$\mathbf{7}$};
\node [below] at (7,-2) {$\mathbf{8}$};
\node [right] at (8.2,0.2) {$\mathbf{9}$};
\node [below] at (9,-2) {$\mathbf{10}$};
\node [below] at (10,0) {$\mathbf{11}$};
\node [below] at (11.8,0) {$\mathbf{12}$};
\node [above] at (9,2) {$\mathbf{15}$};
\node [above] at (8,2) {$\mathbf{16}$};
\node [above] at (7,2) {$\mathbf{17}$};
\node [above] at (6,2) {$\mathbf{18}$};
\node [above] at (4,2) {$\mathbf{13}$};
\node [above] at (2,2) {$\mathbf{14}$};

\node [above] at (1,0) {\scriptsize{$2$}};
\node [above] at (3,0) {\scriptsize{$3$}};
\node [above] at (5,0) {\scriptsize{$1$}};
\node [above] at (7,0) {\scriptsize{$2.5$}};
\node [above] at (9,0) {\scriptsize{$1$}};
\node [above] at (11,0) {\scriptsize{$3.5$}};

\node [left] at (2,-1) {\scriptsize{$1$}};
\node [left] at (3.5,-1) {\scriptsize{$1.3$}};
\node [right] at (4.5,-1) {\scriptsize{$3.2$}};
\node [left] at (7.5,-1) {\scriptsize{$2.1$}};
\node [right] at (8.5,-1) {\scriptsize{$2.5$}};

\node [left] at (2.1,1) {\scriptsize{$1$}};
\node [left] at (4,1) {\scriptsize{$2$}};
\node [left] at (6.1,1) {\scriptsize{$1$}};
\node [left] at (7.5,1) {\scriptsize{$3$}};
\node [left] at (8.1,1) {\scriptsize{$1$}};
\node [right] at (8.5,1) {\scriptsize{$2$}};

\draw [fill] (0,0) circle [radius=0.06];
\draw [fill] (2,0) circle [radius=0.06];
\draw [fill] (4,0) circle [radius=0.06];
\draw [fill] (6,0) circle [radius=0.06];
\draw [fill] (8,0) circle [radius=0.06];
\draw [fill] (10,0) circle [radius=0.06];
\draw [fill] (12,0) circle [radius=0.06];
\draw [fill] (6,0) circle [radius=0.06];
\draw [fill] (2,-2) circle [radius=0.06];
\draw [fill] (3,-2) circle [radius=0.06];
\draw [fill] (5,-2) circle [radius=0.06];
\draw [fill] (7,-2) circle [radius=0.06];
\draw [fill] (9,-2) circle [radius=0.06];
\draw [fill] (2,2) circle [radius=0.06];
\draw [fill] (4,2) circle [radius=0.06];
\draw [fill] (6,2) circle [radius=0.06];
\draw [fill] (7,2) circle [radius=0.06];
\draw [fill] (8,2) circle [radius=0.06];
\draw [fill] (9,2) circle [radius=0.06];

\end{tikzpicture}
\caption{a positive-weighted caterpillar  ${\cal C}=(C,w)$ with $V(C)=[18]$}{\label{fig:caterpillar}}
\end{center}
\end{figure}

Given a vertex $x \in V(C)$, we can define 
$$ t_x = \frac{1}{2} min_{z,y \in V(C)\smallsetminus \{x\}}\{D_{x,y}({\cal C})+D_{x,z}({\cal C})-D_{y,z}({\cal C})\};$$
it is easy to show that, if $x \in L(C)$, then $t_x$ is the weight of the pendant edge associated to $x$ and that $t_x=0$ if and only if $x \notin L(C)$, that is, $x$ belongs to the spine of $C$. 

\begin{rem}\label{codecaterpillar}
Let ${\cal C}=(C,w)$ be a positive-weighted caterpillar with $V(C)=[n]$
and let  $x_1, x_2 \in V(C)$  be  such that $p(x_1,x_2)$ is 
 the spine of $C$. Let $X_1$ (respectively $X_2$) be the set of the leaves of $C$ adjacent to $x_1$ (respectively $x_2$)
(for example, in  Figure \ref{fig:caterpillar}, we have that  $\{x_1, x_2\} =\{2,11\}$ and, 
if we take for instance $ x_1=2$ and
$x_2 =11 $, we have that $X_1=\{1,3,14\}$ and $X_2 =\{12\} $).
 If we consider two vertices $a,b \in V(C)$ such that 
$$D_{a,b}({\cal C}) -t_a -t_b = max_{i,j \in [n]} \{D_{i,j}({\cal C}) -t_i -t_j \},$$
we have that $a \in X_1 \cup \{x_1\} $ and $b \in X_2 \cup\{x_2\} $ or vice versa.
\end{rem}
 
\begin{proof}
It is sufficient to note that, for any $i,j \in V(C)$, $$D_{i,j}({\cal C}) -t_i -t_j =w(p( \overline{i}, \overline{j}) ),$$
 where $\overline{i}$ is defined as follows:
it is  equal to $i$ if $i \notin L(C)$, while it is  the vertex adjacent to $i$ if $i \in L(C)$;
 analogously $\overline{j}$.
\end{proof}

\begin{rem}\label{condizionecaterpillar}
Let ${\cal C}=(C,w)$ a positive-weighted tree with $V(C)=[n]$; call $a,b$ two vertices of $C$ such that  
$$D_{a,b}({\cal C}) -t_a -t_b = max_{i,j \in [n]} \{D_{i,j}({\cal C}) -t_i -t_j \}.$$
The tree $C$ is a caterpillar  if and only if for any $i,j \in [n]\smallsetminus \{a,b\}$ we have that 
\begin{equation}\label{eq:caterpillar}
D_{a,b}({\cal C})+D_{i,j}({\cal C})\geq max \{ D_{a,i}({\cal C})+D_{b,j}({\cal C}), D_{a,j}({\cal C})+D_{b,i}({\cal C})\}.
\end{equation}
\end{rem}

\begin{proof}
If $C$ is a caterpillar, then, using Remark \ref{codecaterpillar} it is easy to check that  (\ref{eq:caterpillar}) holds for any $i,j \in [n]\smallsetminus \{a,b\}$. Now, suppose that $C$ is not a caterpillar, then there must be a vertex $c$ with degree grater than $1$ which is not in  $p(a,b)$. 

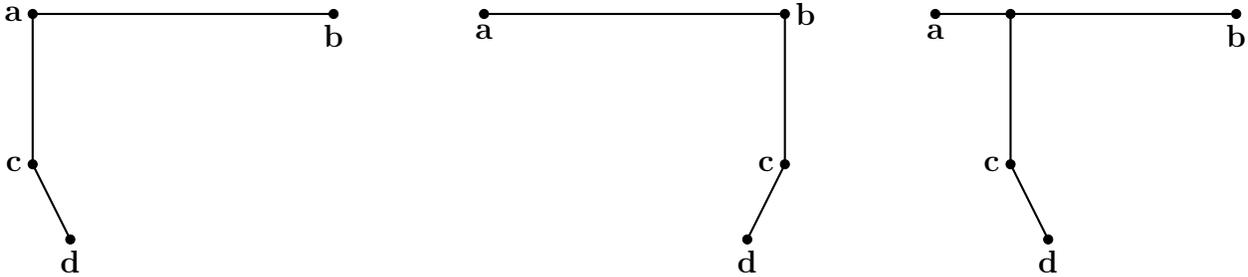
\begin{figure}[h!]
\begin{center}
\begin{tikzpicture}
\draw [thick] (-3,0) --(1,0);
\draw [thick] (-3,0) --(-3,-2);
\draw [thick] (-3,-2) --(-2.5,-3);
\node [left] at (-3,0) {$\mathbf{a}$};
\node [below] at (1,0) {$\mathbf{b}$};
\node [left] at (-3,-2) {$\mathbf{c}$};
\node [below] at (-2.5,-3) {$\mathbf{d}$};
\draw [fill] (-3,0) circle [radius=0.06];
\draw [fill] (1,0) circle [radius=0.06];
\draw [fill] (-3,-2) circle [radius=0.06];
\draw [fill] (-2.5,-3) circle [radius=0.06];

\draw [thick] (3,0) --(7,0);
\draw [thick] (7,0) --(7,-2);
\draw [thick] (7,-2) --(6.5,-3);
\node [below] at (3,0) {$\mathbf{a}$};
\node [right] at (7,0) {$\mathbf{b}$};
\node [left] at (7,-2) {$\mathbf{c}$};
\node [below] at (6.5,-3) {$\mathbf{d}$};
\draw [fill] (3,0) circle [radius=0.06];
\draw [fill] (7,0) circle [radius=0.06];
\draw [fill] (7,-2) circle [radius=0.06];
\draw [fill] (6.5,-3) circle [radius=0.06];

\draw [thick] (9,0) --(13,0);
\draw [thick] (10,0) --(10,-2);
\draw [thick] (10,-2) --(10.5,-3);
\node [below] at (9,0) {$\mathbf{a}$};
\node [below] at (13,0) {$\mathbf{b}$};
\node [left] at (10,-2) {$\mathbf{c}$};
\node [below] at (10.5,-3) {$\mathbf{d}$};
\draw [fill] (9,0) circle [radius=0.06];
\draw [fill] (10,0) circle [radius=0.06];
\draw [fill] (13,0) circle [radius=0.06];
\draw [fill] (10,-2) circle [radius=0.06];
\draw [fill] (10.5,-3) circle [radius=0.06];

\end{tikzpicture}
\caption{cases $(1),(2)$ and $(3)$} {\label{fig:caterpillar2}}
\end{center}
\end{figure}

We have three cases:
\begin{enumerate}
\item $p(a,c) \cap p(a,b)= \{a\}$;
\item $p(a,c) \cap p(a,b)= p(a,b)$;
\item $\{a\} \subsetneq p(a,c) \cap p(a,b)\subsetneq p(a,b)$.
\end{enumerate}

Here we study the third case, the other ones are analogous. Call $d$ a vertex such that $p(c,d) \cap p(c,a) = \{c\}$ (see Figure \ref{fig:caterpillar2}), we have that
$$D_{a,b}({\cal C})+D_{c,d}({\cal C}) < D_{a,c}({\cal C})+D_{b,d}({\cal C})$$
and
$$D_{a,b}({\cal C})+D_{c,d}({\cal C}) < D_{a,d}({\cal C})+D_{b,c}({\cal C}),$$
which is absurd.
\end{proof}

Now we are ready to 
give a characterization of caterpillarlike families of positive real numbers:
\begin{thm}
Let $\{D_I\}_{I \in {[n] \choose 2}}$ be a family  in $\R_{+}$.
Call $a,b$ two elements of $[n]$ such that 
$$D_{a,b} -t_a -t_b = max_{i,j \in [n]} \{D_{i,j} -t_i -t_j \};$$ 
the family is caterpillarlike if and only if the following conditions hold:

\begin{itemize}
\item[$(i)$] the family satisfies  the four-point condition,
\item[$(ii)$] the family is median,
\item[$(iii)$] $D_{a,b}+D_{i,j}\geq max \{ D_{a,i}+D_{b,j}, D_{a,j}+D_{b,i}\}$ for any $i,j \in [n]\smallsetminus \{a,b\}$.
\end{itemize}
\end{thm}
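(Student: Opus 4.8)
The plan is to combine Theorem~\ref{thm:median} with Remark~\ref{condizionecaterpillar}, since the latter already characterizes exactly when a positive-weighted tree is a caterpillar in terms of its $2$-weights. Concretely, the argument splits into the two implications.

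\medskip

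\emph{($\Longrightarrow$)} Suppose the family is caterpillarlike, so there is a positive-weighted caterpillar $\mathcal{C}=(C,w)$ with $V(C)=[n]$ realizing it. A caterpillar is in particular a tree, so by the ``only if'' direction of Theorem~\ref{thm:median} the four-point condition holds and the family is median; this gives $(i)$ and $(ii)$. For $(iii)$, note first that the quantities $t_i$ depend only on the $2$-weights of $\mathcal{C}$, hence only on the $D_I$; so the vertices $a,b$ chosen in the statement of the theorem are exactly a pair as in Remark~\ref{condizionecaterpillar} (with $D_{i,j}({\cal C})=D_{i,j}$), and the ``only if'' direction of that remark yields inequality~\eqref{eq:caterpillar}, which is precisely $(iii)$.

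\medskip

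\emph{($\Longleftarrow$)} Conversely, assume $(i)$, $(ii)$, $(iii)$. By the ``if'' direction of Theorem~\ref{thm:median}, conditions $(i)$ and $(ii)$ produce a positive-weighted tree $\mathcal{T}=(T,w)$ with $V(T)=[n]$ and $D_I(\mathcal{T})=D_I$ for all $I$. Since the $t_i$ are determined by the $2$-weights, the pair $a,b$ fixed in the theorem is a pair realizing the maximum of $D_{i,j}(\mathcal{T})-t_i-t_j$ over $i,j\in[n]$, and condition $(iii)$ is exactly inequality~\eqref{eq:caterpillar} for $\mathcal{T}$. Hence, by the ``if'' direction of Remark~\ref{condizionecaterpillar}, the tree $T$ is a caterpillar, and $\mathcal{T}$ witnesses that the family is caterpillarlike.

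\medskip

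The only point that needs care — and the step I expect to be the main (mild) obstacle — is verifying that the auxiliary quantity $t_x$ is genuinely a function of the family $\{D_I\}$ alone, so that the pair $(a,b)$ selected from the numbers $D_I$ really does coincide with the pair selected from the $2$-weights of whatever realizing tree is at hand. This is immediate from the defining formula $t_x=\tfrac12\min_{z,y}\{D_{x,y}+D_{x,z}-D_{y,z}\}$ once we substitute $D_{\ast}(\mathcal{C})=D_\ast$ (resp. $D_\ast(\mathcal{T})=D_\ast$), but it is the hinge that lets us transfer between Remark~\ref{condizionecaterpillar}, stated for a given weighted tree, and the present statement, phrased purely in terms of the abstract family. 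Everything else is a direct citation of the two results above.
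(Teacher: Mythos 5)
Your proposal is correct and follows essentially the same route as the paper's own proof: the forward direction cites the tree characterization (Theorem~\ref{thm:median}) for $(i)$--$(ii)$ and Remark~\ref{condizionecaterpillar} for $(iii)$, and the converse applies the same two results in reverse. Your extra remark that $t_x$ is computable from the family $\{D_I\}$ alone (so the pair $a,b$ transfers between the abstract family and any realizing tree) is a point the paper leaves implicit, and it is a worthwhile clarification.
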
  

\begin{proof}
Suppose that the family $\{D_I\}_I$ is caterpillarlike, then there exists a positive-weighted caterpillar ${\cal C}=(C,w)$ with $V(C)=[n]$ realizing the family. Obviously the family must satisfy conditions $(i)$ and $(ii)$ and, by Remark \ref{condizionecaterpillar}, also condition $(iii)$ holds.

On the other hand, suppose to have a family of positive real numbers satisfying conditions $(i)$, $(ii)$ and $(iii)$. Since the family satisfies the four-point condition and is median, by Theorem \ref{thm:median} there exists a positive-weighted tree ${\cal C}=(C,w)$ with $V(C)=[n]$ realizing the family. Moreover, by condition $(iii)$ and Remark \ref{condizionecaterpillar}, $C$ is a caterpillar, as we wanted to prove. 
\end{proof}

Finally, we want to observe that, if we have a family of numbers in $\mathbb{R_+}$ which is caterpillarlike and  we call $a,b$ two elements of $[n]$ such that $D_{a,b} -t_a -t_b = max_{i,j \in [n]} \{D_{i,j} -t_i -t_j \}$, then it is easy to construct a positive-weighted caterpillar realizing the family: it is sufficient to draw a path of length $D_{a,b}$ with endpoints $a$ and $b$; for any $i \in [n] \smallsetminus \{a,b\}$, if $t_i >0$ attach a pendant edge with weight $t_i$ and leaf $i$ to a point of the path which has distance from $a$ equal to $D_{a,i}-t_i$; if $t_i =0$, call $i$ a vertex on the path which has distance from $a$ equal to $D_{a,i}.$ 


\section{Polygons}

Let ${\cal P}=(P,w)$ be 
a positive-weighted polygon  such that $V(P)=[n]$.

Observe that in case 
 ${\cal P}$ is not pruned, there is at most one useless edge $e$. So, if we delete $e$, we obtain a positive-weighted snake ${\cal {\tilde P}}=(\tilde{P},\tilde{w})$ with $V(\tilde{P})=[n]$ and with the same family of $2$-weights.

Suppose now that ${\cal P}$ is pruned: 
by Remark \ref{latounico},  for any $i \in V(P)$,
the vertices $x$ and $y$ adjacent to $i$ are exactly the ones
such that $D_{i,x}({\cal P})$ and $D_{i,y}({\cal P})$ are indecomposable
so it is possible to recover the order of the vertices of the polygon starting from the $2$-weights. 

\begin{defin}
Let $P$ be a polygon with $[n]$ as vertex set. We say that the vertex set  is sequentially  ordered if  $i$ and $i+1$ are adjacent for any $i \in [ n-1]$ and 
$n$ and $1$ are adjacent. 
\end{defin}

\begin{defin} \label{ordine} 
Let $\{D_I\}_{I \in {[n] \choose 2}}$ be a family  in $\R_{+}$
 satisfying the triangle inequalities and such that, for any $i \in [n]$, there exist exactly two elements $x,y \in [n] \smallsetminus \{i\}$ for which $D_{i,x}$ and $D_{i,y}$ are indecomposable. We can  rename the elements of $[n]$  according the following algorithm:

rename $1$ and $2$ two elements of $[n]$ such that $D_{1,2}=min\{D_{i,j}\}$ and define $H=\{1,2\}$.
Observe that $D_{1,2}$ must be indecomposable. 
  Rename $3$ the unique element in $[n] \smallsetminus \{1,2\}$ such that $D_{2,3}$ is indecomposable and put $3$ in $H$.  Recursively, call $i+1$ the unique element in $[n] \smallsetminus \{i-1,i\}$ such that $D_{i,i+1}$ is indecomposable. If $i+1 \in H$ stop the algorithm, otherwise put $i+1$ in $H$. 
\end{defin}

\begin{thm}\label{prunedpolygon}
Let $\{D_I\}_{I \in {[n] \choose 2}}$ be a family  in $\R_{+}$ satisfying the triangle inequalities; there exists a pruned positive-weighted polygon ${\cal P}=(P,w)$ with $V(P)=[n]$ realizing the family if and only if the following conditions hold:

\begin{itemize}
\item[$(i)$] for any $i \in [n]$ there are exactly two elements $x,y \in [n]$ such that $D_{i,x}$ and $D_{i,y}$ are indecomposable;
\item[$(ii)$] if $H$ is the set described in Definition \ref{ordine},  then the cardinality of $H$ is $n$;
\item[$(iii)$] if the elements of $[n]$ are renamed as in Definition  \ref{ordine}, then, for any $a<b \in [n]$, we have that
\begin{equation} \label{eq:1}
D_{a,b}=min\left\{ \sum_{i=a}^{b-1} D_{i,i+1}, \sum_{i=b}^{n-1} D_{i,i+1} + D_{1,n}+\sum_{i=1}^{a-1} D_{i,i+1} \right\}.
\end{equation}
\end{itemize}
\end{thm}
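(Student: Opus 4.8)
The plan is to derive both implications from Remark~\ref{latounico}, which identifies, for a positive-weighted graph with vertex set $[n]$, the indecomposable $2$-weights $D_{i,j}$ with the useful edges $e(i,j)$ (and asserts $D_{i,j}=w(e(i,j))$ in that case). The key observation is that in a \emph{pruned} polygon every edge is useful, so indecomposability of $2$-weights exactly detects adjacency; condition $(i)$ then merely records that every vertex has degree $2$, and the algorithm of Definition~\ref{ordine} becomes nothing but a walk around the cycle.

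For necessity, suppose ${\cal P}=(P,w)$ is a pruned positive-weighted polygon realizing the family. First I would note that $D_{1,2}=\min\{D_{i,j}\}$ forces $D_{1,z}+D_{z,2}\geq 2D_{1,2}>D_{1,2}$ for every $z$, so $D_{1,2}$ is indecomposable and $e(1,2)\in E(P)$; then, using Remark~\ref{latounico} and the fact that each vertex of a polygon has exactly two neighbours, I would show by induction that the vertex $i+1$ chosen at step $i$ of Definition~\ref{ordine} is precisely the neighbour of $i$ in $P$ other than $i-1$. Hence the algorithm traverses $P$ and returns to an already-used vertex only after all $n$ vertices have been listed, which yields $(i)$ and $(ii)$ and shows that after the relabelling $E(P)=\{e(1,2),\dots,e(n-1,n),e(n,1)\}$ with $w(e(i,i+1))=D_{i,i+1}$ and $w(e(n,1))=D_{1,n}$ (again by Remark~\ref{latounico}). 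Finally, since in a cycle graph the only two paths joining $a<b$ are the two arcs, of weights $\sum_{i=a}^{b-1}D_{i,i+1}$ and $\sum_{i=b}^{n-1}D_{i,i+1}+D_{1,n}+\sum_{i=1}^{a-1}D_{i,i+1}$, and $D_{a,b}({\cal P})$ is the smaller of these, condition $(iii)$ follows.

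For sufficiency, assuming $(i)$--$(iii)$, I would first note that $(i)$ makes the graph $\Gamma$ on $[n]$ whose edges are the indecomposable pairs $2$-regular, hence a disjoint union of cycles, and that the algorithm of Definition~\ref{ordine} traces the cycle of $\Gamma$ through $\{1,2\}$ (it never gets stuck: $D_{1,2}=\min\{D_{i,j}\}$ is indecomposable as above, and at each step $i-1$ is already an indecomposable partner of $i$, so the next choice is forced); condition $(ii)$ then says that this cycle has $n$ vertices, so $\Gamma$ is a single $n$-cycle $1-2-\cdots-n-1$. I would then take $P=\Gamma$ with $w(e(i,i+1))=D_{i,i+1}$ for $i\in[n-1]$ and $w(e(n,1))=D_{1,n}$, a positive-weighted polygon on $[n]$. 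For $a<b$ the two arcs are again the only $a$--$b$ paths, their weights are exactly the two quantities under the minimum in (\ref{eq:1}), so $D_{a,b}({\cal P})$ equals that minimum, which by $(iii)$ is $D_{a,b}$; thus ${\cal P}$ realizes the family. Moreover each $D_{i,i+1}$ ($i\in[n-1]$) and $D_{1,n}$ is indecomposable by construction of the relabelling, and the corresponding edges lie in $E(P)$, so Remark~\ref{latounico} forces each of them to be useful; since $P$ has no other edges, ${\cal P}$ is pruned.

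The part I expect to require the most care is matching the algorithmic walk of Definition~\ref{ordine} with the cyclic structure of the polygon: showing in the necessity direction that it really runs all the way around $P$ (using that $D_{1,2}$ is indecomposable to start and Remark~\ref{latounico} at each step), and in the sufficiency direction that a $2$-regular indecomposability graph with a distinguished $n$-vertex component must be a single $n$-cycle --- i.e. that the algorithm neither gets stuck nor closes up prematurely before $(ii)$ is invoked. The small boundary instances of (\ref{eq:1}) --- $b=a+1$ and the pair $\{1,n\}$ corresponding to the edge $e(n,1)$ --- should also be checked so that the statement ``exactly two $a$--$b$ paths, namely the two arcs'' is literally correct in all cases.
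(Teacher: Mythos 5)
Your proposal is correct and follows essentially the same route as the paper's proof: both directions hinge on Remark~\ref{latounico} to identify indecomposable $2$-weights with useful edges (hence with adjacency in a pruned polygon), on the observation that a cycle contains exactly two paths between any two vertices whose weights are the two terms in (\ref{eq:1}), and on the same explicit construction of the weighted polygon in the sufficiency direction. Your extra care about the algorithm of Definition~\ref{ordine} not closing up prematurely simply fills in a step the paper dismisses as easy to check.
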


\begin{proof}
Suppose that there exists a pruned positive-weighted polygon ${\cal P}=(P,w)$ with $V(P)=[n]$ such that $D_{i,j}({\cal P})=D_{i,j}$ for any $i,j \in [n]$. It is easy to check that conditions $(i)$ and $(ii)$ hold. Moreover, if we rename the vertices as in Definition \ref{ordine}, the vertex set    is sequentially  ordered. Since ${\cal P}$ is pruned, for any $i \in [n-1]$ the $2$-weight $D_{i,i+1}({\cal P})$ is realized by  $e(i,i+1)$ and the $2$-weight $D_{1,n}({\cal P})$ is realized by  $e(1,n)$ (see Remark \ref{latounico}). Obviously, for any two vertices $a,b\in [n]$, with $a<b$, a subgraph realizing the $2$-weight $D_{a,b}({\cal P})$ is a  path with endpoints $a$ and $b$   and in the polygon there are exactly two different paths with endpoints $a$ and $b$. Their weights are the numbers at the second member of (\ref{eq:1}), so we get condition $(iii)$.

On the other hand, let $\{D_I\}_I$ be  a family of positive real numbers  satisfying conditions $(i)$,$(ii)$ and $(iii)$. By condition $(i)$ and $(ii)$ we can rename all the elements of  $[n]$ as in Definition \ref{ordine}.
Let  ${\cal P}=(P,w)$ be  the positive-weighted polygon with $V(P)=[n]$, 
with the vertex set sequentially ordered,   and  
 such that $w(e(i,i+1))=D_{i,i+1}$ for any $i \in [n-1]$ and $w(e(1,n))=D_{1,n}$ (see Figure \ref{figure2}).   
 
\begin{figure}[h!]
\begin{center}
\begin{tikzpicture}
\draw [thick] (0,0) --(1,0);
\draw [thick] (1,0) --(2,0.5);
\draw [thick] (0,0) --(-1,0.5);
\draw [thick] (2.7,1.3) --(2,0.5);
\draw [thick] (-1.7,1.3) --(-1,0.5);
\draw [thick] (2.7,1.3) --(3,2);
\draw [thick] (-1.7,1.3) --(-2,2);
\draw [thick] (3,2.7) --(3,2);
\draw [thick] (-2,2.7) --(-2,2);
\draw [thick] (1,4.7) --(2,4.2);
\draw [thick] (0,4.7) --(-1,4.2);
\draw [thick] (2.7,3.4) --(2,4.2);
\draw [thick] (-1.7,3.4) --(-1,4.2);
\draw [thick] (2.7,3.4) --(3,2.7);
\draw [thick] (-1.7,3.4) --(-2,2.7);
\draw [thick] (0,4.7) --(1,4.7);

\node [left] at (-2,2.7) {$\mathbf{1}$};
\node [right] at (-2,2.35) {$D_{1,2}$};
\node [left] at (-2,2) {$\mathbf{2}$};
\node [right] at (-1.85,1.75) {$D_{2,3}$};
\node [left] at (-1.7,1.3) {$\mathbf{3}$};
\node [right] at (-1.45,0.9) {$D_{3,4}$};
\node [left,below] at (-1.2,0.5) {$\mathbf{4}$};
\node [right] at (-0.6,0.35) {$D_{4,5}$};
\node [left,below] at (0,0) {$\mathbf{ \quad\quad 5  \quad . . .}$};
\node [right] at (-1.85,3.05) {$D_{1,n}$};
\node [left] at (-1.7,3.4) {$\mathbf{n}$};
\node [right] at (-1.4,3.7) {$D_{n,n-1}$};
\node [left] at (-1,4.2) {$\mathbf{n-1}$};
\draw [fill] (0,0) circle [radius=0.06];
\draw [fill] (1,0) circle [radius=0.06];
\draw [fill] (-1,0.5) circle [radius=0.06];
\draw [fill] (2,0.5) circle [radius=0.06];
\draw [fill] (-1.7,1.3) circle [radius=0.06];
\draw [fill] (2.7,1.3) circle [radius=0.06];
\draw [fill] (-2,2) circle [radius=0.06];
\draw [fill] (3,2) circle [radius=0.06];
\draw [fill] (3,2.7) circle [radius=0.06];
\draw [fill] (-2,2.7) circle [radius=0.06];
\draw [fill] (2.7,3.4) circle [radius=0.06];
\draw [fill] (-1.7,3.4) circle [radius=0.06];
\draw [fill] (2,4.2) circle [radius=0.06];
\draw [fill] (-1,4.2) circle [radius=0.06];
\draw [fill] (0,4.7) circle [radius=0.06];
\draw [fill] (1,4.7) circle [radius=0.06];
\end{tikzpicture}
\caption{ \label{figure2}}
\end{center}
\end{figure}

We have to prove that $D_{a,b}({\cal P})=D_{a,b}$ for any $a,b \in [n]$; 
obviously a subgraph realizing $D_{a,b}({\cal P})$ is  a  path with endpoints  $a$ and $b$
 and in the polygon there are exactly two different paths with endpoints  $a$ and $b$. By the definition of ${\cal P}$, their weights are the two numbers  at the  second member of (\ref{eq:1}), so we have that 
$$D_{a,b}({\cal P})=min\left\{ \sum_{i=a}^{b-1} D_{i,i+1}, \sum_{i=b}^{n-1} D_{i,i+1} + D_{1,n}+\sum_{i=1}^{a-1} D_{i,i+1} \right\}=D_{a,b},$$
where the last equality holds by  (\ref{eq:1}).
Observe that ${\cal P}$ is pruned, in fact, if an edge $e(a,b)$ (with $a$ and $b$ adjacent vertices)  were useless,   then $D_{a,b}({\cal P})$ would not be indecomposable, which is absurd because we have constructed ${\cal P}$ in such a way that two vertices are adjacent if and only if 
$D_{a,b}$ is indecomposable.
\end{proof}

Now we can give a characterization of the families of positive real numbers that are polygonlike:
\begin{thm}
A family of positive real numbers $\{D_I\}_{I \in {[n]  \choose 2}}$ satisfying the triangle inequalities is polygonlike if and only if either it is snakelike or it satisfies conditions $(i)$,$(ii)$ and $(iii)$ of Theorem \ref{prunedpolygon}.
\end{thm}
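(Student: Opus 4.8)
The plan is to reduce the statement to Theorem \ref{prunedpolygon} together with one elementary observation relating snakes and polygons: deleting an edge from a polygon produces a snake on the same vertex set, and adding a heavy enough edge to a snake produces a polygon on the same vertex set, and (under the relevant hypotheses) neither operation changes the family of $2$-weights. I would prove the two implications separately.

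\emph{The implication ``$\Leftarrow$''.} If the family satisfies conditions $(i)$, $(ii)$, $(iii)$ of Theorem \ref{prunedpolygon}, then that theorem already yields a (pruned) positive-weighted polygon with vertex set $[n]$ realizing the family, so the family is polygonlike and there is nothing more to do. Suppose instead that the family is snakelike, and let ${\cal S}=(S,w)$ be a positive-weighted snake with $V(S)=[n]$ realizing it; we may assume $n\geq 3$, since for $n=2$ a snake is trivially also a polygon. Let $u,v$ be the two leaves of $S$, let $P$ be the polygon on $[n]$ obtained from $S$ by adding the edge $e(u,v)$, and let ${\cal P}'=(P,w')$ where $w'$ agrees with $w$ on $E(S)$ and $w'(e(u,v))=w(S)+1$. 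For any distinct $i,j\in[n]$, every path of $P$ through $e(u,v)$ has weight $>w(S)$, whereas the path joining $i$ and $j$ inside $S$ has weight $\leq w(S)$; hence $D_{i,j}({\cal P}')=D_{i,j}({\cal S})=D_{i,j}$, so the family is polygonlike.

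\emph{The implication ``$\Rightarrow$''.} Suppose the family is polygonlike and fix a positive-weighted polygon ${\cal P}=(P,w)$ with $V(P)=[n]$ realizing it. If ${\cal P}$ is pruned, Theorem \ref{prunedpolygon} gives that the family satisfies $(i)$, $(ii)$ and $(iii)$. If ${\cal P}$ is not pruned, let $e$ be a useless edge (as recalled before Theorem \ref{prunedpolygon}, when ${\cal P}$ is not pruned there is exactly one such edge), and let $\tilde{\cal P}=(\tilde P,w)$ with $\tilde P=P\smallsetminus\{e\}$ and $w$ restricted. Since every vertex of a polygon has degree $2$, $\tilde P$ is a path on all of $[n]$, i.e.\ a snake. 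As $e$ is useless, for every distinct $i,j$ some path realizing $D_{i,j}({\cal P})$ avoids $e$ and hence lies in $\tilde P$, so $D_{i,j}(\tilde{\cal P})\leq D_{i,j}({\cal P})$; the reverse inequality holds because $\tilde P$ is a subgraph of $P$. Thus $\tilde{\cal P}$ realizes the family, which is therefore snakelike.

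I do not expect a genuine obstacle: the substance of the characterization is packaged in Theorem \ref{prunedpolygon} (and, on the snake side, in Theorem \ref{thm:snakelike}). The only points requiring care are checking that the added edge in the snakelike case is heavy enough that no weight-minimal path uses it, checking that removing the useless edge in the non-pruned case preserves every $2$-weight, and keeping track of the degenerate small values of $n$ where ``polygon'' and ``snake'' describe the same object.
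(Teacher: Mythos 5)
Your proposal is correct and follows essentially the same route as the paper's own proof: delete the unique useless edge to pass from a non-pruned polygon to a snake, add a sufficiently heavy edge between the two leaves to pass from a snake to a polygon, and invoke Theorem \ref{prunedpolygon} for the pruned case. You merely spell out the weight estimates (using $w(S)+1$ where the paper allows any weight $\geq D_{i,j}$) and the degenerate small-$n$ cases in slightly more detail.
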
  
\begin{proof}
Suppose there exists a positive-weighted polygon ${\cal P}=(P,w)$ with $V(P)=[n]$ realizing the family; if ${\cal P}$ is pruned, then by Theorem \ref{prunedpolygon} the family must satisfy conditions $(i)$,$(ii)$ and $(iii)$. If ${\cal P}$ is not pruned, then we can delete the unique useless edge  and we obtain a positive-weighted snake realizing the family, so the family is snakelike.

Conversely, suppose there exists a positive-weighted snake ${\cal S}=(S,w)$ with $V(S)=[n]$ realizing the family. If $i,j$ are the endpoints of the snake, we can add to the snake an edge $e(i,j)$ with weight any real number greater than or equal to $D_{i,j}$: it is easy to check that
the  positive-weighted polygon with $n$ vertices we have obtained realizes the family $\{D_{I}\}_I$, so the family is also polygonlike. Finally, if the family satisfies conditions $(i)$,$(ii)$ and $(iii)$ of Theorem \ref{prunedpolygon},  it is polygonlike by Theorem  \ref{prunedpolygon}.
\end{proof}

\section{Complete graphs and bipartite graphs}

An immediate consequence of  Remark \ref{latounico} is the following characterization of the co-graphlike families of $2$-weights:

\begin{rem}\label{thm:complete}
Let $\{D_I\}_{I \in {[n] \choose 2}}$ be a family  in $\R_{+}$ satisfying the triangle inequalities; the family is co-graphlike if and only if $D_{i,j}$ is indecomposable for any $i,j \in [n]$. 
\end{rem}

\begin{proof}
Suppose there exists a pruned positive-weighted complete graph ${\cal G}=(G,w)$ with $V(G)=[n]$ realizing the family; thus $e(i,j)$ is useful for any $i,j \in [n]$; so, by Remark \ref{latounico}, $D_{i,j}({\cal G})$ is indecomposable for any $i,j \in [n]$. On the other hand,  suppose  $D_{i,j}$ is indecomposable for any $i,j \in [n]$; 
  by  Theorem \ref{HY}, there exists a positive-weighted graph ${\cal G}=(G,w)$ with $V(G)=[n]$ realizing the family; moreover, since $D_{i,j}$ is indecomposable for any $i,j \in [n]$, then, by Remark \ref{latounico}, we have that $e(i,j) \in E(G)$ and $e(i,j)$ is useful for any $i,j \in [n]$. 
\end{proof}

Now we want to characterize  bigraphlike families of positive real numbers; first of all,  given a positive-weighted  bipartite graph ${\cal G}=(G,w)$ on $X,Y \subset V(G)$, we show that   it is possible to recover $X$ and $Y$  from the family of $2$-weights of ${\cal G}$.

\begin{prop}\label{charsecondomodo}
Let ${\cal B}=(B,w)$ be a positive-weighted bipartite graph on $X$ and $Y$ with $V(B)=[n]$; let  $x \in X$ and $y \in Y$;  then:

$$X=\{x\} \cup \Big\{ i \in [n]\smallsetminus\{x,y\} \; | \; \exists \, j_1,...,j_t \in [n] \textrm{  with $t$ odd such that }$$
\begin{equation}\label{eq:insieme2}
D_{x,i}({\cal B})=D_{x,j_1}({\cal B})+D_{j_1,j_2}({\cal B})+...+  D_{j_t,i}({\cal B})\textrm{  and the elements of the sum are indecomposable }\Big\}
\end{equation}
and
$$Y=\Big\{ i \in [n]\smallsetminus\{x\} \; |   \textrm{ either  } D_{x,i} \textrm{ is indecomposable or } \, \exists \, j_1,...,j_t \in [n] \textrm{  with $t$ even such that }$$
\begin{equation}\label{eq:insieme3}
D_{x,i}({\cal B})=D_{x,j_1}({\cal B})+D_{j_1,j_2}({\cal B})+...+  D_{j_t,i}({\cal B})\textrm{  and the elements of the sum are indecomposable }\Big\}.
\end{equation}
\end{prop}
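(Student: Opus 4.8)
The plan is to show that membership in $X$ and $Y$ can be read off from the "parity of shortest alternating walks" in the subgraph of indecomposable $2$-weights. By Remark~\ref{latounico}, the $2$-weights $D_{k,\ell}({\cal B})$ that are indecomposable are exactly those for which $e(k,\ell)\in E(B)$ and this edge is useful, and in that case $D_{k,\ell}({\cal B})=w(e(k,\ell))$. Since $B$ is bipartite on $X,Y$, every edge of $B$ joins a vertex of $X$ with a vertex of $Y$; hence any chain $D_{x,j_1}({\cal B})+D_{j_1,j_2}({\cal B})+\cdots+D_{j_t,i}({\cal B})$ in which every summand is indecomposable corresponds to a walk $x,j_1,j_2,\dots,j_t,i$ in $B$ along useful edges, and such a walk alternates between $X$ and $Y$. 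Consequently, if the number $t+1$ of edges in the walk is even (i.e. $t$ odd) the endpoint $i$ lies in $X$, and if $t+1$ is odd (i.e. $t$ even) then $i$ lies in $Y$; this, together with the trivial observation that $D_{x,i}$ indecomposable forces $e(x,i)\in E(B)$ and hence $i\in Y$, shows that the right-hand sides of~(\ref{eq:insieme2}) and~(\ref{eq:insieme3}) are contained in $X$ and $Y$ respectively.

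For the reverse inclusions I would argue as follows. Let $i\in X\smallsetminus\{x\}$. Consider a path $H$ in $B$ realizing $D_{x,i}({\cal B})$; by Remark~\ref{rem2} every sub-path of $H$ realizes the corresponding $2$-weight. The point is that $H$ can be taken to use only useful edges — indeed, one may replace any useless edge of $H$ by an alternative realizing path, and after finitely many such replacements (using that the weights are positive and that a shortest path cannot repeat a vertex) one obtains a realizing walk all of whose edges are useful; each such edge $e(k,\ell)$ then satisfies $D_{k,\ell}({\cal B})=w(e(k,\ell))$ and is indecomposable by Remark~\ref{latounico}. Writing the vertices of this walk as $x,j_1,\dots,j_t,i$, additivity of the weight along the walk gives exactly the equation in~(\ref{eq:insieme2}) with every summand indecomposable; and since the walk alternates sides and starts at $x\in X$ and ends at $i\in X$, it has an even number of edges, so $t$ is odd, as required. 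The same reasoning with $i\in Y$ and a realizing path from $x\in X$ to $i\in Y$ gives an odd number of edges along a useful-edge walk, hence $t$ even, which is the condition in~(\ref{eq:insieme3}); the degenerate case $t=0$ is precisely the alternative "$D_{x,i}$ indecomposable''. Finally, the two displayed sets are disjoint and their union is all of $[n]$ (every $i\ne x$ has some realizing path to $x$, hence falls into one of the two cases, and the parity is well-defined because any two $x$–$i$ walks along useful edges, both being walks in the bipartite graph $B$, have the same parity of length), so they must coincide with $X$ and $Y$.

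The main obstacle I expect is the step asserting that a realizing path can be chosen to consist only of useful edges: one has to be careful that replacing a useless edge by a detour does not re-introduce other useless edges in an unending cycle, and that the detour stays within a shortest path. I would handle this by induction on the number of edges of a realizing walk, at each stage contracting or rerouting, and using positivity of $w$ together with Remark~\ref{rem2} to guarantee termination with a genuine (cycle-free) realizing path whose edges are all useful. A cleaner alternative, which I would state if it shortens the argument, is to observe directly that among all paths realizing a given $D_{x,i}({\cal B})$ one with the fewest edges uses only useful edges, since a useless edge on such a path could be bypassed by a realizing path with strictly fewer edges — contradicting minimality. Once this lemma is in place, the parity bookkeeping in the bipartite graph is routine, and the proposition follows.
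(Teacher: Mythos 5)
Your overall strategy is the same as the paper's: indecomposable $2$-weights correspond, via Remark \ref{latounico}, to useful edges of $B$, so the chains in (\ref{eq:insieme2}) and (\ref{eq:insieme3}) are walks that alternate between $X$ and $Y$, and the parity of their length decides the side. The inclusion of the right-hand sides into $X$ and $Y$ is fine. The problem is the converse inclusion, which you correctly identify as resting on the lemma that every $2$-weight $D_{x,i}({\cal B})$ admits a realizing path all of whose edges are useful. Neither of your two proposed justifications of this lemma holds up. The ``fewest edges'' argument fails because uselessness of an edge $e(k,\ell)$ on a realizing path $H$ only guarantees \emph{some} realizing path (for $D_{k,\ell}$, or for $D_{x,i}$) avoiding $e(k,\ell)$; that detour may well have \emph{more} edges than the single edge it replaces, so a path with the minimal number of edges need not be free of useless edges and no contradiction with minimality arises. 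The rerouting induction has exactly the same difficulty -- each replacement can introduce new useless edges and lengthen the path, and you give no monotone quantity that forces termination -- so as written the key step is a genuine gap.

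The paper sidesteps the graph-theoretic rerouting entirely by decomposing the \emph{number} $D_{x,i}({\cal B})$: for $i \in X$ the quantity $D_{x,i}({\cal B})$ cannot be indecomposable (there is no edge inside $X$), so $D_{x,i}({\cal B})=D_{x,z}({\cal B})+D_{z,i}({\cal B})$ for some $z$, and one recurses on each summand; since every term is positive, each summand is strictly smaller than the term it replaces and the values range over the finite set $\{D_{a,b}({\cal B})\}$, so the recursion terminates with $D_{x,i}({\cal B})$ written as a sum of indecomposable $2$-weights. Remark \ref{latounico} then turns each summand into a useful edge, the concatenation is a walk of total weight $D_{x,i}({\cal B})$ realizing it, and the parity bookkeeping you describe finishes the proof. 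If you replace your rerouting lemma by this recursive numerical decomposition (or, equivalently, prove your lemma by induction on the value of $D_{x,i}({\cal B})$ over the finite set of $2$-weights), your argument becomes correct and essentially coincides with the paper's.
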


\begin{proof}
Let us prove (\ref{eq:insieme2}); the other equality can be proved analogously. Call $R$ the second member of (\ref{eq:insieme2}); we want to prove that $X=R$. 
\begin{itemize}
\item[-] $X \subset R:$ let $i \in X \smallsetminus\{x\}$; 
observe that  $D_{x,i}(\cal B)$ is not indecomposable: otherwise 
by Remark \ref{latounico}, we would have $e(x,i) \in E(B)$, which is absurd; so we can
write $D_{x,i}(\cal B)$ as
$$D_{x,j_1}({\cal B})+D_{j_1,j_2}({\cal B})+...+  D_{j_t,i}({\cal B})$$ for some $j_1,..., j_t$ with 
$D_{x,j_1}({\cal B}),D_{j_1,j_2}({\cal B}),....,  D_{j_t,i}({\cal B})$ indecomposable. By 
Remark \ref{latounico}, the $2$-weights $D_{x,j_1}({\cal B}),D_{j_1,j_2}({\cal B}),....,  D_{j_t,i}({\cal B})$ are realized respectively by 
$e(x,j_1), e(j_1,j_2), ....., e(j_t,i) $; thus the path given by the union of these edges realizes   $D_{x,i}(\cal B)$ and, since $x,i \in X$, we have that   $t$ is necessarily  odd.

\item[-] $R \subset X:$ if $i \in R$ then there exist $j_1,...,j_t \in [n]$ with $t$ odd such that $D_{x,i}({\cal B})=D_{x,j_1}({\cal B})+D_{j_1,j_2}({\cal B})+...+  D_{j_t,i}({\cal B})$ and the elements of the sum are indecomposable. By Remark \ref{latounico}, the $2$-weights $D_{x,j_1}({\cal B}),D_{j_1,j_2}({\cal B}),... D_{j_t,i}({\cal B})$ are realized respectively only by the edges $e(x,j_1)$, $e(j_1,j_2)$,...,$e(j_t,i)$, which implies that $D_{x,i}({\cal B})$ is realized by the path given by these edges; so, since $t$ is odd, $i \in X$. 
\end{itemize}
\end{proof}

\begin{rem} 
Let ${\cal B}=(B,w)$ be a positive-weighted bipartite graph on $X$ and $Y$ with $V(B)=[n]$. Let $x,y \in [n]$ be such that 
 $$D_{x,y}({\cal B})=min_{1 \leq i<j\leq n}\{D_{i,j}({\cal B})\};$$ 
hence, obviously, $D_{x,y}({\cal B})$ is indecomposable, and then,
by Remark \ref{rem2},  $e(x,y) \in E(B)$ and 
 $D_{x,y}({\cal B})$ is realized only by the path with unique edge
 $e(x,y)$.
\end{rem}

Now  we are ready to give a characterization of the families of positive real numbers that are bigraphlike:

\begin{thm}
Let $\{D_I\}_{I \in {[n] \choose 2}}$ be a family  in $\R_{+}$ satisfying the triangle inequalities and let $x,y \in [n]$ such that  $D_{x,y}=min_{1 \leq i<j\leq n}\{D_{i,j}\}$; define
$$X=\{x\} \cup \Big\{ i \in [n]\smallsetminus\{x,y\} \; | \; \exists \, j_1,...,j_t \in [n] \textrm{  with $t$ odd such that }$$
$$D_{x,i}=D_{x,j_1}+D_{j_1,j_2}+...+  D_{j_t,i}\textrm{  and the elements of the sum are indecomposable }\Big\}$$
and
$$Y=\Big\{ i \in [n]\smallsetminus\{x\} \; |   \textrm{ either }  \, D_{x,i} \textrm{ is indecomposable or } \, \exists \, j_1,...,j_t \in [n] \textrm{  with $t$ even such that }$$
$$D_{x,i}=D_{x,j_1}+D_{j_1,j_2}+...+  D_{j_t,i}\textrm{  and the elements of the sum are indecomposable }\Big\}.$$
The family $\{D_I\}_I$ is bigraphlike if and only if the following conditions hold:

(1) $X \cap Y = \emptyset$ 

(2) for any $a,b \in X$ (respectively $Y$), there exists $z \in Y$ (respectively $X$) such that:
$$D_{a,b}=D_{a,z}+D_{z,b}.$$
\end{thm}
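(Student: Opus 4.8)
The plan is to prove both implications directly, using the two preparatory results already at hand: Proposition~\ref{charsecondomodo} (which shows that the sets $X$ and $Y$ defined purely in terms of the $2$-weights genuinely recover the bipartition of any realizing bipartite graph) and the Hakimi--Yau Theorem~\ref{HY} (which produces \emph{some} positive-weighted graph realizing the family once the triangle inequalities hold).

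For the forward direction, assume the family is bigraphlike, so there is a positive-weighted bipartite graph $\mathcal{B}=(B,w)$ on subsets $X',Y'$ of $[n]$ with $D_{i,j}(\mathcal{B})=D_{i,j}$ for all $i,j$. First I would note that the minimizing pair $x,y$ with $D_{x,y}=\min_{i<j}\{D_{i,j}\}$ forces $e(x,y)\in E(B)$ (the displayed remark just before the theorem), hence $x$ and $y$ lie on opposite sides of the bipartition; say $x\in X'$, $y\in Y'$. Then Proposition~\ref{charsecondomodo}, applied with this choice of $x\in X'$ and $y\in Y'$, gives exactly $X=X'$ and $Y=Y'$. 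Condition~(1) $X\cap Y=\emptyset$ is then immediate from the definition of a bipartite graph. For condition~(2), take $a,b$ on the same side, say $a,b\in X=X'$; since $B$ is bipartite, any path in $B$ joining $a$ to $b$ has even length $\geq 2$, so it passes through a vertex $z\in Y$ adjacent-in-the-path to both its neighbours; more carefully, a shortest such path realizing $D_{a,b}(\mathcal{B})$ has an interior vertex $z\in Y$, and by Remark~\ref{rem2} the sub-paths realize $D_{a,z}(\mathcal{B})$ and $D_{z,b}(\mathcal{B})$, giving $D_{a,b}=D_{a,z}+D_{z,b}$. The symmetric statement for $a,b\in Y$ is identical.

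For the converse, assume the triangle inequalities plus (1) and (2). By Theorem~\ref{HY} there is a positive-weighted graph $\mathcal{G}=(G,w)$ with $V(G)=[n]$ and $D_{i,j}(\mathcal{G})=D_{i,j}$ for all $i,j$. The goal is to exhibit a bipartite graph with the same $2$-weights. The natural candidate is the subgraph $B$ of $G$ whose edges are exactly the edges $e(i,j)$ with $D_{i,j}$ indecomposable (equivalently, by Remark~\ref{latounico}, the useful edges of $G$ of a certain type); I would first argue that it suffices to keep only such edges, because by Hakimi--Yau one may assume $G$ is pruned, and in a pruned graph deleting useless edges is harmless --- but in fact a cleaner route is to build $B$ from scratch: let $B$ have vertex set $[n]$ and put in an edge $e(i,j)$ with weight $D_{i,j}$ precisely when $D_{i,j}$ is indecomposable. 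One then shows (a) $B$ is bipartite on $X,Y$, and (b) $D_{i,j}(B)=D_{i,j}$ for all $i,j$.

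For (a): by the very definitions of $X$ and $Y$ (chains of indecomposable $2$-weights starting from $x$), every vertex lies in $X\cup Y$, and by (1) this is a disjoint union; it remains to check no indecomposable edge joins two vertices of $X$ or two of $Y$. If $e(i,j)$ were such an edge with $i,j\in X$, then $D_{i,j}$ is indecomposable and, composing the defining chain for $i$ with this edge, $D_{x,j}$ would be expressible as a chain of indecomposables of length $t+1$; but the parity bookkeeping (chains to $X$-vertices have odd length, chains to $Y$-vertices even) together with condition~(2) --- which says any same-side pair has its $2$-weight decomposed through the other side, hence is \emph{not} indecomposable --- yields a contradiction. This parity/condition-(2) interaction is the subtle point and I expect it to be the main obstacle: one must rule out that a vertex gets placed in both $X$ and $Y$ via chains of different parities, and show that condition~(2) is exactly what guarantees all cross-edges respect the bipartition. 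For (b): any $D_{i,j}$ is, in the pruned graph $G$, realized by a path all of whose edges are indecomposable (decompose repeatedly using Remark~\ref{rem2} until each piece is indecomposable, which by Remark~\ref{latounico} is an edge), and every such edge survives in $B$ with the same weight; conversely any path in $B$ is a path in (a pruned version of) $G$, so $D_{i,j}(B)=D_{i,j}(G)=D_{i,j}$. Hence $\mathcal{B}=(B,w)$ is the desired positive-weighted bipartite graph and the family is bigraphlike.
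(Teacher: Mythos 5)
Your proof is correct, and the forward direction coincides with the paper's: apply Proposition \ref{charsecondomodo} (after noting that the minimal pair $x,y$ must be an edge, hence straddles the bipartition) to get $X=X'$, $Y=Y'$, and use Remark \ref{rem2} on a realizing path for a same-side pair to get condition (2). The converse, however, takes a genuinely different route. The paper builds the \emph{complete} bipartite graph on $X$ and $Y$, weighting every cross-edge $e(a,b)$ by $D_{a,b}$, and then verifies the $2$-weights directly: any path gives a sum of $D$'s which is at least $D_{a,b}$ by the triangle inequalities, while the single edge (for cross pairs) or the two-edge path through the $z$ provided by condition (2) (for same-side pairs) gives the matching upper bound. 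You instead take a Hakimi--Yau realization, prune it, observe via Remark \ref{latounico} that the pruned graph's edge set is exactly the set of indecomposable pairs with weights $D_{i,j}$, and then show this graph is bipartite on $X,Y$ because condition (2) (together with (1)) forces every same-side pair to be decomposable, hence a non-edge. Your approach buys a structural statement (the pruned realization of a bigraphlike family is itself bipartite, and is a subgraph of any bipartite realization), at the cost of needing the pruning argument and the fact that every $D_{x,i}$ decomposes into a chain of indecomposables (a finite descent using positivity, which is what guarantees $X\cup Y=[n]$ --- a point both you and the paper leave implicit but which your construction leans on more heavily, since it also gives connectivity of your $B$). The paper's construction is more self-contained, needing only the triangle inequalities and condition (2). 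One small remark: your worry about ``parity bookkeeping'' being the main obstacle is unfounded --- a vertex landing in both $X$ and $Y$ via chains of different parities is excluded outright by hypothesis (1), and your own observation that condition (2) makes every same-side $2$-weight decomposable already closes the bipartiteness check; the detour through ``composing the defining chain for $i$ with the edge $e(i,j)$'' is unnecessary (and not quite sound as stated, since such a concatenation need not realize $D_{x,j}$).
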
  

\begin{proof}
Suppose there exist two subset $X$ and $Y$ of $[n]$ and  a positive-weighted bipartite graph ${\cal B}=(B,w)$ on $X'$ and $Y'$  with $V(B)=[n]$ realizing the family. By Proposition \ref{charsecondomodo} we have that $X=X'$ and $Y=Y'$ (or vice versa), so $X \cap Y = \emptyset$.  Let $a,b \in X$; a path realizing $D_{a,b}({\cal B})$ must contain a vertex $z \in Y$, so, by Remark \ref{rem2}, we have that  $D_{a,b}({\cal B})=D_{a,z}({\cal B})+D_{z,b}({\cal B}).$ If both $a$ and $b$ are elements of $Y$, the proof is analogous.

Now, suppose that $\{D_I\}_I$  satisfyies (1) and (2). 
Let ${\cal B}=(B,w)$
be the positive-weighted bipartite graph  on $X$ and $Y$ such that:
\begin{itemize}
\item[-] $V(G)=[n]$;
\item[-] $E(G)=\{(a,b) \, | \, a \in X,\, b \in Y\}$;
\item[-] $w(e(a,b))=D_{a,b}$ for any $a \in X$ and $b \in Y$.
\end{itemize}

We want to prove that $D_{a,b}({\cal B})=D_{a,b}$ for any $a,b \in [n]$.
Let $p$ be a path  realizing $D_{a,b}({\cal B})$ and let
$j_1,.., j_t \in [n]$ be such that   $p$ is given by 
 $e(a,j_1), e(j_1,j_2),...,e(j_t,b)$; then  we  have that
\begin{equation} \label{geq}
D_{a,b}({\cal B}) = w(e(a,j_1))+w(e(j_1,j_2))+...+w(e(j_t,b))=D_{a,j_1}+D_{j_1,j_2}+...+D_{j_t,b} \geq D_{a,b},
\end{equation}
where the last inequality follows from the triangle inequalities.

 If $a \in X$ and $b \in Y$ (or vice versa), then 
\begin{equation} \label{leq1}
D_{a,b}({\cal B}) \leq w(e(a,b))=D_{a,b},
\end{equation}
  so, from (\ref{geq}) and (\ref{leq1}), we get 
$D_{a,b}({\cal B}) =D_{a,b}$.

If both $a$ and $b$ are in $X$ (if they are in $Y$, we can argue analogously), by assumption, there exists $z \in Y$ such that $D_{a,z} + D_{b,z} = D_{a,b};$ we have that 
\begin{equation} \label{leq2}
D_{a,b}({\cal B}) \leq D_{a,z} + D_{b,z} = D_{a,b},
\end{equation}
where the inequality  holds
because the path given by $e(a,z) $ and $ e(z,b)$ contains $a$ and $b$ as vertices and its weight 
is equal to $D_{a,z} + D_{b,z}$; 
 so, from (\ref{geq}) and (\ref{leq2}), we get
  also in this case  $D_{a,b}({\cal B}) =D_{a,b}$.
\end{proof}

Finally we give also a characterization of  co-bigraphlike families of positive real numbers:

\begin{rem}\label{thm:complete}
Let $\{D_I\}_{I \in {[n] \choose 2}}$ be a family  in $\R_{+}$
 which is bigraphlike on $X,Y \subset [n]$. The family is co-bigraphlike on $X$ and $Y$  if and only if $D_{i,j}$ is indecomposable for any $i \in X,j \in Y$. 
\end{rem}

\begin{proof}
Let ${\cal B}=(B,w)$ be a  positive-weighted complete bipartite graph on $X$ and $Y$, with $V(B)=[n]$, realizing the family.
If it is pruned, then  
 $e(i,j)$ is useful for any $i \in X, j \in Y$; so, by Remark \ref{latounico}, $D_{i,j}({\cal B})$ is indecomposable for any $i \in X, j \in Y$. 
 On the other hand, 
 if  $D_{i,j}$ is indecomposable for any $i \in X$ and $j \in Y$, then, by Remark \ref{latounico}, $e(i,j) \in E(B)$ and $e(i,j)$ is useful for any $i \in X, j \in Y$. 
\end{proof}

\section{Planar graphs}

\begin{defin} Let $G$ be a graph and let $e(u,v)$ be an edge of $G$.
We say that a graph $G'$ is obtained from $G$ by a subdivision of 
 the edge $ e(u,v)$ if $V(G') $ is the union of $V(G)$ and a new vertex $z$  and $E(G')$ is $E(G)- \{e(u,v)\} \cup  \{e(u,z),e(z,v)\}$.
We say that a graph $G'$ is a subdivision of a graph $G$ if it is the
 graph resulting from the subdivision of some edges in $G$. 
\end{defin}

\begin{thm} \label{Kur}
{\bf (Kuratowski)}
A finite graph is planar if and only if it does not contain a subgraph that is a subdivision of $K_5$  or of $K_{3,3}$.  
\end{thm}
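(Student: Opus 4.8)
The statement is Kuratowski's classical theorem, which is normally quoted rather than reproved; the plan below is the standard line of argument. \emph{The easy direction first.} I would record that $K_5$ and $K_{3,3}$ are themselves non-planar: Euler's formula $v-e+f=2$ for a connected plane graph, together with the face-counting bounds $e\le 3v-6$ for a simple graph and $e\le 2v-4$ for a simple triangle-free graph, is violated by $K_5$ (here $v=5$, $e=10$) and by $K_{3,3}$ (here $v=6$, $e=9$) respectively. Next I would invoke two stability properties: every subgraph of a planar graph is planar, and a graph is planar if and only if every subdivision of it is planar, since inserting or smoothing degree-two vertices does not affect embeddability. Combining these, if $G$ contains a subgraph that is a subdivision of $K_5$ or of $K_{3,3}$, then $G$ is non-planar; contrapositively, a planar graph contains no such subdivision.

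\emph{Reduction of the hard direction.} For the converse I would first pass to the minor form (Wagner): it suffices to show that a finite graph with no $K_5$-minor and no $K_{3,3}$-minor is planar, because a graph with a $K_5$- or $K_{3,3}$-minor always contains a subdivision of $K_5$ or of $K_{3,3}$ (immediate for $K_{3,3}$ since it is cubic; for $K_5$ one checks that a graph with a $K_5$-minor but no $K_5$-subdivision contains a $K_{3,3}$-subdivision). I would then reduce to $3$-connected graphs by induction on $|V(G)|$: if $G$ has a separating set of at most two vertices, split $G$ along it, adding to each side the edge joining the (at most two) separating vertices; each side is smaller and still has no $K_5$- or $K_{3,3}$-minor, hence is planar by induction, and planar embeddings of the two sides can be pasted along the shared vertex or edge into a planar embedding of $G$. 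So we may assume $G$ is $3$-connected with $|V(G)|\ge 5$, the cases $|V(G)|\le 4$ being subgraphs of $K_4$ and hence planar.

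\emph{The $3$-connected case.} Here I would use the structural fact that every $3$-connected graph on at least $5$ vertices has an edge $e=uv$ whose contraction $G/e$ (taken as a simple graph) is again $3$-connected. Since $G/e$ has fewer vertices and, being a minor of $G$, still has no $K_5$- or $K_{3,3}$-minor, induction yields a plane embedding of $G/e$. Let $w$ be the vertex arising from the contraction of $e$. Because $G/e-w$ is $2$-connected, the face of $G/e-w$ that contained $w$ is bounded by a cycle $C$, and I would read off the cyclic order along $C$ of the neighbours of $u$ and of the neighbours of $v$ (common neighbours lie on $C$ as well). If the $u$-neighbours and the $v$-neighbours can be separated along $C$ into two disjoint arcs, then $u$ and $v$ can be reinstated inside that face with the edge $uv$ between them and the incident edges redrawn, giving a plane embedding of $G$; this case closes the induction. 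Otherwise the two neighbour sets interleave along $C$, and a short case analysis — two interleaving pairs, or three common neighbours of $u$ and $v$ on $C$ — produces, using $C$ and the edge $uv$, a $K_{3,3}$-minor or a $K_5$-minor of $G$, contradicting the hypothesis. Hence only the re-embedding case occurs, and $G$ is planar.

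\emph{Main obstacle.} The crux is the final case analysis in the $3$-connected step: one must verify that every configuration of the neighbours on $C$ that obstructs re-embedding after un-contracting $e$ actually yields one of the two forbidden minors, and that the $3$-connectivity of $G$ itself (not merely of $G/e$) supplies the disjoint $C$-paths needed to route the branch paths. A second, smaller technical point is the passage from the minor form back to the subdivision form in the $K_5$ case. The $1$- and $2$-separation reduction and the existence of a contractible edge in a $3$-connected graph I would quote from standard references rather than reprove.
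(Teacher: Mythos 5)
The paper does not prove this statement at all: it is Kuratowski's classical theorem, stated with attribution and used purely as a black box in the section on planar graphs, so there is no proof of the authors' to compare yours against. Your decision to treat it as a quotable classical result is exactly what the paper does, and the plan you sketch is the standard modern proof (Wagner's minor formulation, reduction to the $3$-connected case via $1$- and $2$-separations, Thomassen's contractible-edge lemma, and the dichotomy between re-embedding $u$ and $v$ in the face bounded by $C$ and extracting a forbidden minor). The easy direction via Euler's formula and the bounds $e\le 3v-6$ and $e\le 2v-4$ is complete as written. The converse is a correct outline but, as you yourself flag, defers the two genuinely technical points: the interleaving case analysis on the facial cycle $C$, where the $3$-connectivity of $G$ itself must be used to route the disjoint branch paths, and the lemma that a $K_5$-minor forces a $K_5$- or $K_{3,3}$-subdivision; also, in the $2$-separation step one should justify that adding the virtual edge between the two cut vertices creates no new forbidden minor (it can be simulated by a path through the other side). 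None of these is a gap in the sense of a wrong idea --- they are the standard hard parts of the standard proof --- but for the purposes of this paper the theorem is, appropriately, just a citation.
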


\begin{defin} Let $G$ be a subdivision of $K_5$. We say that a vertex 
of $G$ is a \textbf{true vertex} if it is a vertex of  $K_5$. We call \textbf{verges} of $G$ the paths that are subdivisions of the edges of $K_5$.
\end{defin}

\begin{prop} \label{belliipruned}
Let ${\cal G}=(G,w)$ be a positive-weighted graph with $V(G)=[n]$.
Suppose it is pruned. Let us denote $D_{i,j}({\cal G})  $ by $D_{i,j} $ for any $i,j \in [n]$.

(i) $G$ contains a subdivision of $K_5$ $\Longleftrightarrow$ 
there exists $Q \in {[n] \choose 5}$ such that for any distinct  $a,b
\in Q$, 
either $D_{a,b}$ is indecomposable 
or 
there exists a  sequence $(x_1,...., x_r)$ in $[n]-Q$ (depending on $\{a,b\}$) such that $D_{a,x_1},....,D_{x_r,b}$  are indecomposable and, if $\{a,b\} \neq \{a',b'\}$, the sequence of $\{a,b\}$ does not intersect the sequence of $\{a',b'\}$.

(ii) $G$ contains a subdivision of $K_{3,3}$ $\Longleftrightarrow$ 
there exist disjoint $A,B \in {[n] \choose 3}$ such that for any  $a \in A$ and $b \in B$, 
either $D_{a,b}$ is indecomposable 
or 
there exists a  sequence $(x_1,...., x_r)$ in $[n]-A-B$ (depending on $\{a,b\}$) such that $D_{a,x_1},....,D_{x_r,b}$  are indecomposable and, if $\{a,b\} \neq \{a',b'\}$, the sequence of $\{a,b\}$ does not intersect the sequence of $\{a',b'\}$.
\end{prop}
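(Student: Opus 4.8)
The plan is to exploit Remark \ref{latounico}, which identifies indecomposable $2$-weights with useful edges of $G$: since ${\cal G}$ is pruned, every edge is useful, so $D_{i,j}$ is indecomposable if and only if $e(i,j)\in E(G)$. Hence the combinatorial data on the right-hand side of each equivalence is just a description, purely in terms of the family $\{D_{i,j}\}$, of the existence of a certain subgraph of $G$: a chain $D_{a,x_1},\dots,D_{x_r,b}$ of indecomposable $2$-weights corresponds exactly to a path $a,x_1,\dots,x_r,b$ in $G$ all of whose edges belong to $E(G)$, and the disjointness-of-sequences condition says precisely that these paths are internally vertex-disjoint. So I would first translate both statements into the purely graph-theoretic assertion: ``$G$ contains a subdivision of $K_5$ (resp.\ $K_{3,3}$) if and only if there is a $5$-subset $Q$ (resp.\ a pair of disjoint $3$-subsets $A,B$) of $V(G)=[n]$ such that for each relevant pair $\{a,b\}$ either $e(a,b)\in E(G)$ or there is a path in $G$ from $a$ to $b$ whose interior vertices lie in $[n]\setminus Q$ (resp.\ $[n]\setminus(A\cup B)$), and these interior vertex sets are pairwise disjoint.''

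For the forward direction of (i): if $G$ contains a subdivision $H$ of $K_5$, let $Q$ be the set of true vertices of $H$ (five of them, all in $[n]$ since $V(H)\subseteq V(G)=[n]$). For each pair of true vertices $a,b$, the corresponding verge of $H$ is a path in $G$ from $a$ to $b$; if it has no interior vertex it is the edge $e(a,b)$, so $D_{a,b}$ is indecomposable by Remark \ref{latounico}; otherwise its interior vertices form a sequence $(x_1,\dots,x_r)$ in $[n]$, none of which is a true vertex (hence in $[n]-Q$), and each consecutive pair is joined by an edge of $G$, so the $2$-weights $D_{a,x_1},\dots,D_{x_r,b}$ are indecomposable. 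Distinct verges of a subdivision share only their true endpoints, which gives the disjointness of the sequences. The case (ii) is identical with $Q$ replaced by the two colour classes $A,B$ of true vertices of the subdivision of $K_{3,3}$, noting that only pairs $\{a,b\}$ with $a\in A$, $b\in B$ are edges of $K_{3,3}$.

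For the converse of (i): given $Q\in\binom{[n]}{5}$ with the stated property, build a subgraph $H$ of $G$ by taking, for each pair $a,b\in Q$, either the edge $e(a,b)$ (which lies in $E(G)$ by Remark \ref{latounico}, since $D_{a,b}$ is indecomposable) or the path $a,x_1,\dots,x_r,b$ (all of whose edges lie in $E(G)$, again by Remark \ref{latounico}). The disjointness hypothesis on the sequences guarantees that these ten paths are internally vertex-disjoint and that no interior vertex coincides with a true vertex of $Q$, so $H$ is genuinely a subdivision of $K_5$; thus $G$ contains a subdivision of $K_5$. Again (ii) is the same argument restricted to pairs $a\in A$, $b\in B$. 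The main obstacle — really the only point needing care — is checking in the converse direction that the constructed $H$ is honestly a subdivision and not a more degenerate object: one must verify that the interior vertices of the chosen paths are genuinely distinct from the true vertices and from each other across different pairs, which is exactly what the ``sequences do not intersect'' clause provides, and that each chosen path is a simple path (which can be arranged by deleting any repeated vertex and shortcutting, since a shorter chain of edges between the same endpoints still consists of edges of $G$). Everything else is a direct bookkeeping translation through Remark \ref{latounico}, so no substantial new idea is required.
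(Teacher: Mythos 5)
Your proposal is correct and follows essentially the same route as the paper: both directions reduce, via Remark \ref{latounico} and the hypothesis that ${\cal G}$ is pruned, to translating chains of indecomposable $2$-weights into edges of $G$ forming internally disjoint paths (a subdivision of $K_5$ or $K_{3,3}$), and conversely reading the condition off the verges of a given subdivision. The extra care you take about simplicity and internal disjointness of the constructed paths is a detail the paper leaves implicit, but it does not change the argument.
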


\begin{proof} Let us prove (i) 
 (the proof of (ii) is analogous).
 
$\Longleftarrow$ By Remark \ref{latounico}, if $D_{i,j}$ is indecomposable, then
$e(i,j) \in E(G)$. For any $a,b \in Q$, let $c_{a,b}$ be the following path:
the path given only by the edge $e(a,b)$ if $D_{a,b}$ is indecomposable, the path
given by the edges $e(a,x_1)$, $e(x_1,x_2)$, ....., $e(x_r,b)$ if $(x_1,..., x_r)$ is a sequence as in the statement of the proposition.

 The union of the paths $c_{a,b}$ for $a,b \in Q$ gives a subgraph that is a subdivision of $K_5$.

$ \Longrightarrow$  Let $G'$ be a  subdivision of $K_5$ in $G$.
Let $Q$ be the set of the true vertices of $G'$. Since 
${\cal G}$ is pruned, every edge is useful, in particular,
for any $x,y \in [n]$ such that $e(x,y)$ is  in a  verge of $G'$, we have that  $e(x,y)$ is useful,   so, by Remark \ref{latounico}, the $2$-weight $D_{x,y}$ is indecomposable and then we get our statement.
\end{proof}

\begin{thm}
Let $\{D_I\}_{I \in {[n]  \choose 2}}$ be a family in $\mathbb{R_+}$. It is planargraphlike if and only if  the following conditions hold:

(a) the family satisfies the triangle inequalities;

(b) there does not exist $ Q \in {[n] \choose 5}$ such that, for any distinct  $a,b \in Q$, 
either $D_{a,b}$ is indecomposable 
or 
there exists a  sequence $(x_1,...., x_r)$ in $[n]-Q$ (depending on $\{a,b\}$) such that $D_{a,x_1},....,D_{x_r,b}$  are indecomposable 
and, if $\{a,b\} \neq \{a',b'\}$, the sequence of $\{a,b\}$ does not intersect the sequence of $\{a',b'\}$;

(c) there do not exist disjoint  $ A,B \in {[n] \choose 3}$ such that,  for any  $a \in A$ and $b \in B$, 
either $D_{a,b}$ is indecomposable 
or 
there exists a  sequence $(x_1,...., x_r)$ in $[n]-A-B$ (depending on $\{a,b\}$) such that $D_{a,x_1},....,D_{x_r,b}$  are indecomposable and, if $\{a,b\} \neq \{a',b'\}$, the sequence of $\{a,b\}$ does not intersect the sequence of $\{a',b'\}$.
\end{thm}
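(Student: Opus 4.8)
The plan is to reduce the whole statement to the case of \emph{pruned} graphs, where Proposition \ref{belliipruned} together with Kuratowski's theorem (Theorem \ref{Kur}) translates planarity directly into conditions (b) and (c). Two elementary facts make this reduction work. First, deleting a useless edge $e$ from a positive-weighted graph ${\cal G}=(G,w)$ with $V(G)=[n]$ changes neither the vertex set nor any $2$-weight: if $e$ is useless then every $2$-weight is realized by a path avoiding $e$, so $D_{i,j}({\cal G}-e)\le D_{i,j}({\cal G})$, while $G-e\subseteq G$ gives the reverse inequality; moreover $G-e$ is still connected, since for any $i,j$ some realizing path lies in $G-e$. Second, any subgraph of a planar graph is planar. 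Iterating the first fact (the edge set is finite) shows that any positive-weighted graph realizing a family of positive reals can be replaced by a pruned one realizing the same family on the same vertex set $[n]$. Consequently, a family is planargraphlike if and only if it is realized by some pruned positive-weighted planar graph, and this reformulation is what lets Proposition \ref{belliipruned} apply.

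For the implication ``planargraphlike $\Longrightarrow$ (a),(b),(c)'': let ${\cal G}=(G,w)$ be a positive-weighted planar graph with $V(G)=[n]$ realizing $\{D_I\}_I$, and prune it as above to a pruned positive-weighted graph ${\cal G}'=(G',w')$ with $V(G')=[n]$ and $D_{i,j}({\cal G}')=D_{i,j}$ for all $i,j$; since $G'\subseteq G$, the graph $G'$ is planar. Condition (a) holds because $\{D_I\}_I$ is p-graphlike — being realized by the (planar, hence ordinary) graph ${\cal G}$ — and so it satisfies the triangle inequalities by Theorem \ref{HY}. For (b) and (c): by Kuratowski the planar graph $G'$ contains no subdivision of $K_5$ and no subdivision of $K_{3,3}$; feeding this into the two equivalences of Proposition \ref{belliipruned} applied to the pruned graph ${\cal G}'$ (whose $2$-weights are exactly the $D_{i,j}$) and reading them contrapositively yields precisely conditions (b) and (c).

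For the converse, assume (a), (b), (c). By (a) and Theorem \ref{HY} there is a positive-weighted graph ${\cal G}=(G,w)$ with $V(G)=[n]$ and $D_{i,j}({\cal G})=D_{i,j}$ for all $i,j$; prune it to a pruned positive-weighted graph ${\cal G}'=(G',w')$ with $V(G')=[n]$ realizing the same family. Apply Proposition \ref{belliipruned} to ${\cal G}'$: since (b) holds, $G'$ contains no subdivision of $K_5$, and since (c) holds, $G'$ contains no subdivision of $K_{3,3}$; hence $G'$ is planar by Kuratowski, and ${\cal G}'$ witnesses that $\{D_I\}_I$ is planargraphlike. The only genuinely delicate point in the argument is the pruning reduction — checking that removing a useless edge preserves every $2$-weight and keeps the graph connected, and that iterating terminates at a pruned graph on the vertex set $[n]$; once this is in hand, everything else is a direct combination of Proposition \ref{belliipruned}, Theorem \ref{Kur} and Theorem \ref{HY}.
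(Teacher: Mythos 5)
Your proof is correct and follows essentially the same route as the paper's: prune the realizing graph by iteratively deleting useless edges, then combine Proposition \ref{belliipruned} with Kuratowski's theorem and Theorem \ref{HY}. The only difference is that you spell out the pruning step (preservation of $2$-weights and connectivity, termination) in more detail than the paper, which simply asserts it.
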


\begin{proof} $\Longleftarrow$
Let ${\cal G}$ be a positive-weighted planar graph realizing the family.
By eliminating a useless edge, then another one and so on, we get 
a pruned positive-weighted planar graph realizing the family.  So we can 
conclude by Proposition \ref{belliipruned} and Theorem \ref{Kur}.

$\Longrightarrow$ 
By condition (a) and Theorem \ref{HY}, there exists a positive-weighted 
graph ${\cal G}$ realizing the family; 
by eliminating a useless edge, then another one and so on, we get 
a pruned positive-weighted planar graph realizing the family; 
by conditions (b) and (c) and  using Proposition \ref{belliipruned} and Theorem \ref{Kur}, we can conclude that it is planar.
\end{proof}

{\bf Acknowledgemnts.}
This work was supported by the National Group for Algebraic and Geometric Structures, and their  Applications (GNSAGA-INdAM). 
The first author was supported by Ente Cassa di Risparmio di Firenze.

{\small }

\end{document}